\numberwithin{equation}{section}
\newtheorem{theorem}{Theorem}[section]
\newtheorem{proposition}[theorem]{Proposition}
\theoremstyle{remark}
\newtheorem{remark}[theorem]{Remark}
\theoremstyle{definition}
\newtheorem{definition}[theorem]{Definition}
\newtheorem*{main-definition}{Main Definition}
\begin{document}

\title[Unconditional convergence of eigenfunction expansions]{Unconditional convergence of eigenfunction expansions for abstract and elliptic operators}


\author[V. Mikhailets]{Vladimir Mikhailets}

\address{Institute of Mathematics of the Czech Academy of Sciences, \v{Z}itn\'{a} 25, Praha, 11567, Czech Republic; Institute of Mathematics of the National Academy of Sciences of Ukraine, Tereshchen\-kivs'ka 3, Kyiv, 01024, Ukraine}

\email{vladimir.mikhailets@gmail.com; mikhailets@imath.kiev.ua}


\author[A. Murach]{Aleksandr Murach}

\address{Institute of Mathematics of the National Academy of Sciences of Ukraine, Tereshchen\-kivs'ka 3, Kyiv, 01024, Ukraine}

\email{murach@imath.kiev.ua}



\subjclass[2010]{47B90, 42B37}


\keywords{Spectral expansion, eigenfunction expansion, unconditional convergence, space with two norms, normal operator, elliptic operator, Banach function space}

\begin{abstract}
We study the most general class of eigenfunction expansions for abstract normal operators with pure point spectrum in a complex Hilbert space. We find sufficient conditions for such expansions to be unconditionally convergent in spaces with two norms and also estimate the degree of this convergence. Our result essentially generalizes and complements the known theorems of M.~Krein and of Krasnosel'ski\u{\i} and Pustyl'nik. We apply it to normal elliptic pseudodifferential operators on compact boundaryless $C^{\infty}$-manifolds. We find generic conditions for eigenfunction expansions induced by such operators to converge unconditionally in the Sobolev spaces $W^{\ell}_{p}$ with $p>2$ or in the spaces $C^{\ell}$ (specifically, for the $p$-th mean or uniform convergence on the manifold). These conditions are sufficient and necessary for the indicated convergence on Sobolev or H\"ormander function classes and are given in terms of parameters characterizing these classes. We also find estimates for the degree of the convergence on such function classes. These results are new even for differential operators on the circle and for multiple Fourier series.
\end{abstract}

\maketitle

\section{Introduction}\label{sec1}

Let $(X,\|\cdot\|)$ be a complex or real Banach space. A series $\sum_{n=1}^{\infty}x_{n}$ formed by elements $x_{n}\in X$ is called unconditionally convergent in $X$ if the rearranged series $\sum_{n=1}^{\infty}x_{\sigma(n)}$ converges in $X$ for an arbitrary permutation $\sigma:\mathbb{N}\leftrightarrow\mathbb{N}$. In this case there exists a unique element $x\in X$ such that $\sum_{n=1}^{\infty}x_{\sigma(n)}=x$ for every permutation $\sigma$. If
$\sum_{n=1}^{\infty}\|x_{n}\|<\infty$, then the original series is called absolutely convergent in $X$. Every absolutely convergent series in $X$ converges unconditionally therein. If the space is finite-dimensional, then the converse is true according to Riemann's theorem. However, if $X$ is infinite-dimensional, then there exist unconditionally convergent series in $X$ that do not converge absolutely (as the Dvoretzky--Rogers theorem states \cite{DvoretzkyRogers50}).

In this paper we study series in abstract and function spaces with two norms. The weaker norm among the two is induced by an inner product. Terms of series are pairwise orthogonal and are eigenfunctions of a certain (generally, unbounded) normal operator. Specifically, it may be a self-adjoint operator with discrete spectrum.

The simplest example of such type is given by the trigonometric Fourier series. From the operator-theoretic viewpoint, this is the spectral expansion of a function corresponding to the Laplace operator on the unit circle $\mathbb{T}$. The study of the convergence of such series has a very long history and huge literature (see, e.g., monographs \cite{Bary64, Edwards79-82, Zygmund02}, surveys \cite{AlimovIlinNikishin76, AlimovIlinNikishin77, Golubov84}, and references therein).

It is known that the trigonometric Fourier series of every function $f(\tau)$ from the Lebesgue space $L_{p}(\mathbb{T})$, with $1<p<\infty$, converges to $f$ in this space. This is true for the Fourier series with respect to any rearranged trigonometric system
\begin{equation}\label{rearranged-system}
f(\tau)\sim\frac{a_{0}}{2}+
\sum_{n=1}^{\infty}\bigl[a_{\sigma(n)}\cos(\sigma(n)\tau)+
b_{\sigma(n)}\sin(\sigma(n)\tau)\bigr]
\end{equation}
if and only if $p=2$. Moreover, there exists a certain permutation $\sigma$ and a continuous function $f$ on $\mathbb{T}$ such that series \eqref{rearranged-system} does not converges to $f$ in the space   $L_{p}(\mathbb{T})$ whatever $p>2$ \cite[Theorem~8]{Uljanov58}. Let us indicate another example. The Carleson theorem \cite{Carleson66} states that the Fourier trigonometric series of any function $f\in L_{2}(\mathbb{T})$ converges to $f$ almost everywhere (a.e.) on $\mathbb{T}$. However, as was shown by Kolmogorov \cite{KolmogoroffMenchoff27}, there exists a function $f\in L_{2}(\mathbb{T})$ such that its certain rearranged Fourier series of the form \eqref{rearranged-system} diverges a.e. on $\mathbb{T}$. Thus, conditions for function series to be convergent or unconditionally convergent can be essentially different.

The study of solutions to initial-boundary-value problems for parabolic or hyperbolic differential equations in cylindrical domains by the Fourier method leads to the analysis of the convergence of expansions in eigenfunctions of self-adjoint elliptic differential operators in bounded Euclidean domains (see, e.g., \cite{Ilin60, Ladyzhenskaya53, Maurin72}). Let $G$ be a bounded domain in $\mathbb{R}^{n}$ with smooth boundary, and suppose that an elliptic differential expression with smooth coefficients on $\overline{G}$ induces a self-adjoint operator $L$ with discrete spectrum in the Hilbert space $L_{2}(G)$. Then this space possesses an orthonormal basis $\{e_{j}(\tau):j\in\mathbb{N}\}$ formed by eigenfunctions of $L$. Choosing a function $f\in L_{2}(G)$ arbitrarily, we consider its spectral expansion
\begin{equation}\label{intr-spectral-expansion}
f(\tau)\sim\lim_{\lambda\to\infty}
\sum_{j:|\lambda_{j}|\leq\lambda}c_{j}(f)\,e_{j}(\tau).
\end{equation}
Here, $\lambda_{j}$ is the eigenvalue of $L$ associated with the eigenfunction $e_{j}$, and $c_{j}(f)$ is the Fourier coefficient of $f$ with respect to $e_{j}$. This expansion converges in the $L_{2}(G)$ norm. Therefore, it is  reasonable to ask under which supplementary conditions for $f$ the expansion \eqref{intr-spectral-expansion} converges to $f$ in the stronger $L_{p}(G)$ norm with $p>2$ or uniformly on $G$. There are no comprehensive answers to this question yet even in the case of ordinary differential equations. Results obtained in the many-dimensional case show that it is necessary to narrow down the problem statement essentially and to consider the convergence of the Fourier series \eqref{intr-spectral-expansion} not for single functions but on the classes of differential functions with compact supports in the domain $G$. Let us formulate one complete result of this type, which became a starting point for us (see, e.g., \cite{AlimovIlinNikishin76, AlimovIlinNikishin77}).

Let $W^{s}_{p,0}(G)$ denote the class of all functions that belong to the  $L_{p}$-Sobolev space of real order $s>0$ over $G$ and that their supports are compact subsets of the open set $G$. The spectral expansion \eqref{intr-spectral-expansion} of an arbitrary function $f\in W^{s}_{p,0}(G)$ converges uniformly on every compact set $K\subset G$ if and only if the following three conditions are satisfied:
\begin{equation}\label{intr-3-conditions}
ps>n,\quad s\geq\frac{n-1}{2},\quad\mbox{and}\quad p\geq1.
\end{equation}
Note that the first inequality guarantees the continuity functions of class $W^{s}_{p,0}(G)$, whereas the second inequality provides the validity of the localization principle for spectral expansions on the class of elliptic operators under consideration. If $p\leq2n/(n-1)$, then the second inequality in \eqref{intr-3-conditions} follows from the first one. The claim for the compactness of the support of $f$ is stipulated by the fact that then $f$ satisfies certain homogeneous boundary conditions depending on the elliptic operator $L$. The uniform convergence of the spectral expansions only on compact subsets of $G$ is connected with the presence of the boundary $\partial G$.

The main results of this paper are formulated and proved in Sections \ref{sec4} and~\ref{sec5}. There we study the unconditional convergence of expansions in eigenfunctions of an arbitrary normal elliptic pseudodifferential operator of positive order on a closed smooth manifold~$\Gamma$. The convergence is considered with respect to the norm in the Sobolev space $W^{\ell}_{p}(\Gamma)$ or in the space $C^{\ell}(\Gamma)$, with $p>2$ and $0\leq\ell\in\mathbb{Z}$. We find necessary and sufficient conditions for this unconditional convergence to hold true on various classes of differentiable functions and estimate its degree. To this end, we use the H\"ormander function classes $H^{\alpha}(\Gamma)$ whose differentiation order is a function $\alpha:[1,\infty)\to(0,\infty)$ that O-regularly varies at infinity in the sense of Avakumovi\'c. Such classes and their connection with elliptic pseudodifferential operators are discussed in Section~\ref{sec3}.

Sections \ref{sec6} and~\ref{sec7} specify these results for differential operators on the circle and for multiple Fourier series, which allows us to obtain a number of new valuable results for these objects.

Our proofs of the aforementioned results are based on an abstract theorem about the unconditional convergence (see Definition~\ref{def-convergence}) of expansions in eigenvectors of a normal operator in a space with two norms. This theorem is stated and proved in Section~\ref{sec2}. It develops and refines known theorems of M.~Krein \cite[Theorem~4]{Krein47} and of Krasnosel'ski\u{\i} and Pustyl'nik \cite[Theorem~22.1]{KrasnoselskiiZabreikoPustylnikSobolevskii76} with respect to normal operators with pure point spectrum in a Hilbert space.

Final remarks and comments to the obtained results are given in Section~\ref{sec8}.

\section{The basic abstract result}\label{sec2}

Throughout the paper, we consider complex linear spaces. Let $H$ be an infinite-dimensional (not necessarily separable) Hilbert space, and let $N$ be a normed space. Suppose that $H$ and $N$ are algebraically embedded in a certain linear space. As usual, $\|\cdot\|_{H}$ and $(\cdot,\cdot)_{H}$ respectively denotes the norm and inner product in~$H$, and $\|\cdot\|_{N}$ stands for the norm in~$N$. Thus, the linear space $H\cap N$ is endowed with two norms.

Let $L$ be a normal (specifically, self-adjoint) linear operator in $H$. We allow the case where $L$ is unbounded in $H$ as well as the case where  $L$ is bounded on $H$. We suppose that $L$ has pure point spectrum, i.e. the Hilbert space $H$ has an orthonormal basis $\{e_{j}:j\in\Theta\}$ formed by certain eigenvectors $e_{j}$ of $L$. Here, $\Theta$ is an index set whose cardinality coincides with the Hilbert dimension of $H$. Thus, every vector $f\in H$ expands in the series
\begin{equation}\label{f-series}
f=\sum_{j\in\Theta}(f,e_j)_{H}\,e_j
\end{equation}
in the topology of $H$. (Of course, this series contains only a countable number of nonzero terms, and its sum does not depend on their order.) Let $\lambda_{j}$ denote the eigenvalue of $L$ such that $Le_j=\lambda_{j}e_j$. As usual, $\sigma(L)$ stands for the spectrum of $L$, and $\sigma_{p}(L)$ denotes the point spectrum of $L$, i.e. $\sigma_{p}(L)$ is the set of all eigenvalues of $L$. Note that  eigenvalues of $L$ may be of infinite geometric multiplicity and that the set of all eigenvalues of $L$ may have accumulation points.

Let $\mathcal{Q}$ denote the system of all finite subsets of $\Theta$. We interpret $\mathcal{Q}$ as an upper directed set with respect to the relation $\subseteq$. The equality \eqref{f-series} means for $f\in H$ that the net
\begin{equation}\label{net}
\biggl\{\sum_{j\in\Upsilon}(f,e_j)_{H}\,e_j:\Upsilon\in\mathcal{Q}\biggr\}
\end{equation}
converges to $f$ in $H$.

\begin{definition}\label{def-convergence}
Let $f\in H\cap N$. We say that the expansions of $f$ in eigenvectors of $L$ \emph{converge unconditionally} in the normed space $N$ if the net \eqref{net} lies in $N$ and converges to $f$ in $N$ for every orthonormal basis $\{e_{j}:j\in\Theta\}$ in $H$ formed by eigenvectors of $L$. If this condition is satisfied for every vector $f$ from some class $M\subseteq H\cap N$, we also say that the expansions in eigenvectors of $L$ converge unconditionally in $N$ \emph{on the class} $M$.
\end{definition}

\begin{remark}
Providing $H$ is separable, the eigenvector expansion \eqref{f-series} becomes
\begin{equation}\label{f-series-separable}
f=\sum_{j=1}^{\infty}(f,e_j)_{H}\,e_j.
\end{equation}
In this case, the expansions of $f$ in eigenvectors of $L$ converge unconditionally in $N$ if and only if the series \eqref{f-series-separable} converges to $f$ in $N$ for every orthonormal basis $\{e_{j}:j\in\mathbb{N}\}$ formed by eigenvectors of $L$ (i.e. for any choices of orthonormal bases of eigenspaces of $L$ associated with non-degenerate eigenvalues). It follows from this condition that the series \eqref{f-series-separable} converges under an arbitrary permutation of its terms.
\end{remark}

\begin{theorem}\label{th-basic-series}
Let $\omega,\eta:\sigma(L)\to\mathbb{C}\setminus\{0\}$ be Borel measurable bounded functions, and define the bounded operators $\omega(L)$ and $\eta(L)$ on $H$ as functions of the normal operator~$L$. Assume that the mapping $u\mapsto\omega(L)u$ sets a bounded operator from $H$ to $N$, and denote this operator by~$R$. Let $f$ be an arbitrary vector from the image of the operator $(\omega\eta)(L)$. Then the expansions of $f$ in eigenvectors of $L$ converge unconditionally in the space~$N$. Moreover, the degree of this convergence admits the estimate
\begin{equation}\label{th-estimate-series}
\biggl\|f-\sum_{j\in\Upsilon}(f,e_j)_{H}\,e_j\biggr\|_{N}\leq
\|R\|_{H\to N}\cdot\|g\|_{H}\cdot
\sup_{j\in\Theta\setminus\Upsilon}|\eta(\lambda_j)|
\cdot r_{g,e}(\Upsilon)
\end{equation}
for each set $\Upsilon\in\mathcal{Q}$ and every orthonormal basis $e:=\{e_{j}:j\in\Theta\}$ indicated in Definition~$\ref{def-convergence}$  and with some net $\{r_{g,e}(\Upsilon):\Upsilon\in\mathcal{Q}\}\subseteq[0,1]$ that tends monotonically to zero and depends neither on $\omega$ nor on~$\eta$. Here, $g\in H$ is the unique vector such that $(\omega\eta)(L)g=f$.
\end{theorem}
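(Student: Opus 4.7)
The plan is to use the Borel functional calculus for the normal operator $L$ in order to factor the remainder of the eigenvector expansion of $f$ through the bounded operator $R=\omega(L):H\to N$, so that the $N$-norm estimate reduces to an $H$-norm Bessel tail of the preimage $g$.

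First, I would exploit that $Le_j=\lambda_j e_j$ implies $\phi(L)e_j=\phi(\lambda_j)e_j$ for every bounded Borel function $\phi$ on $\sigma(L)$. Expanding $g=\sum_{j\in\Theta}(g,e_j)_{H}\,e_j$ in $H$ and applying the bounded operator $(\omega\eta)(L)$ termwise yields
\begin{equation*}
f=(\omega\eta)(L)g=\sum_{j\in\Theta}\omega(\lambda_j)\,\eta(\lambda_j)\,(g,e_j)_{H}\,e_j
\end{equation*}
with convergence in $H$. Orthonormality gives $(f,e_j)_{H}=\omega(\lambda_j)\eta(\lambda_j)(g,e_j)_{H}$, so for every $\Upsilon\in\mathcal{Q}$ the remainder can be written as
\begin{equation*}
f-\sum_{j\in\Upsilon}(f,e_j)_{H}\,e_j
=\sum_{j\in\Theta\setminus\Upsilon}\omega(\lambda_j)\,\eta(\lambda_j)\,(g,e_j)_{H}\,e_j
=\omega(L)h_{\Upsilon}=Rh_{\Upsilon},
\end{equation*}
where $h_{\Upsilon}:=\sum_{j\in\Theta\setminus\Upsilon}\eta(\lambda_j)(g,e_j)_{H}\,e_j$ lies in $H$ because $\eta$ is bounded on $\sigma(L)$. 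This factorization through $R$ is the heart of the argument and is what makes the passage from $H$ to $N$ work.

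Second, the boundedness of $R$ then gives $\|f-\sum_{j\in\Upsilon}(f,e_j)_{H}\,e_j\|_{N}\leq\|R\|_{H\to N}\,\|h_{\Upsilon}\|_{H}$, and Parseval on the orthonormal set $\{e_j:j\in\Theta\setminus\Upsilon\}$ yields
\begin{equation*}
\|h_{\Upsilon}\|_{H}^{2}
=\sum_{j\in\Theta\setminus\Upsilon}|\eta(\lambda_j)|^{2}\,|(g,e_j)_{H}|^{2}
\leq\Bigl(\sup_{j\in\Theta\setminus\Upsilon}|\eta(\lambda_j)|\Bigr)^{2}
\sum_{j\in\Theta\setminus\Upsilon}|(g,e_j)_{H}|^{2}.
\end{equation*}
Assuming $g\neq 0$ (the case $g=0$ being trivial), I would then define
\begin{equation*}
r_{g,e}(\Upsilon):=\|g\|_{H}^{-1}\Bigl(\sum_{j\in\Theta\setminus\Upsilon}|(g,e_j)_{H}|^{2}\Bigr)^{1/2},
\end{equation*}
which, by Bessel and Parseval, lies in $[0,1]$, decreases monotonically along $\subseteq$ in $\mathcal{Q}$, and tends to zero because the Fourier series of $g$ converges to $g$ in $H$. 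Crucially, $r_{g,e}(\Upsilon)$ depends only on $g$ and the basis $e$, not on $\omega$ or $\eta$. Substituting gives exactly \eqref{th-estimate-series}.

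Finally, to conclude unconditional convergence in the sense of Definition~\ref{def-convergence}, I would note that $\sup_{j}|\eta(\lambda_j)|\leq\sup_{\sigma(L)}|\eta|<\infty$, so the right-hand side of \eqref{th-estimate-series} tends to zero along the directed set $\mathcal{Q}$ for \emph{every} admissible basis $e$. The partial sums themselves lie in $N$ because $\omega$ is nowhere zero on $\sigma(L)$, hence $e_j=\omega(\lambda_j)^{-1}Re_j\in N$. I do not anticipate any substantive obstacle: the only nontrivial insight is spotting the factorization of the remainder through $R=\omega(L)$; after that, Parseval's identity applied to $g$ (rather than to the more singular $f$) automatically produces a tail that is independent of $\omega$ and $\eta$ and decays monotonically, which is exactly what the statement requires.
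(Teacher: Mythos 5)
Your proposal is correct and follows essentially the same route as the paper's proof: you factor the remainder through $R=\omega(L)$ (the paper writes this as $f-P_{\Upsilon}f=\omega(L)\eta(L)(I-P_{\Upsilon})g$ with the orthoprojector $P_{\Upsilon}$), bound the tail via Parseval by $\sup_{j\in\Theta\setminus\Upsilon}|\eta(\lambda_j)|\cdot\|(I-P_{\Upsilon})g\|_{H}$, and take $r_{g,e}(\Upsilon)=\|(I-P_{\Upsilon})g\|_{H}/\|g\|_{H}$, which is exactly the paper's choice. The only difference is presentational (explicit series versus projector notation), so no further comment is needed.
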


\begin{proof}
Since $\omega(L)\eta(L)e_j=(\omega\eta)(\lambda_j)e_j$ whenever $j\in\Theta$ and since $(\omega\eta)(t)\neq0$ whenever $t\in\sigma(L)$, we conclude that each $e_j\in N$ because the image of $\omega(L)$ lies in $N$. Thus, the left-hand side of \eqref{th-estimate-series} makes sense. Since the operator $(\omega\eta)(L)$ is algebraically reversible, the vector $g$ is well defined for every above-mentioned $f$. Suppose that $f\neq0$, otherwise the conclusion of this theorem is trivial. Given $\Upsilon\in\mathcal{Q}$, we let $P_{\Upsilon}$ denote the orthoprojector of $H$ on the span of $\{e_j:j\in\Upsilon\}$ and obtain the following estimate:
\begin{equation}\label{proof-est1}
\begin{aligned}
\|f-P_{\Upsilon}f\|_{N}=&
\|(\omega\eta)(L)g-P_{\Upsilon}(\omega\eta)(L)g\|_{N}\\
=&\|(\omega\eta)(L)(I-P_{\Upsilon})g\|_{N}=
\|\omega(L)\eta(L)(I-P_{\Upsilon})^{2}g\|_{N}\\
\leq&\|R\|_{H\to N}\cdot\|\eta(L)(I-P_{\Upsilon})\|_{H\to H}\cdot \|(I-P_{\Upsilon})g\|_{H},
\end{aligned}
\end{equation}
with $I$ being the identity operator on $H$. We put
\begin{equation}\label{proof-def-r}
r_{g,e}(\Upsilon):=\|(I-P_{\Upsilon})g\|_{H}\cdot\|g\|_{H}^{-1}
\end{equation}
and see that the net $\{r_{g,e}(\Upsilon):\Upsilon\in\mathcal{Q}\}$ is required. Note also that
\begin{equation}\label{proof-est2}
\|\eta(L)(I-P_{\Upsilon})\|_{H\to H}\leq
\sup_{j\in\Theta\setminus\Upsilon}|\eta(\lambda_j)|.
\end{equation}
Indeed, since
\begin{equation*}
\eta(L)(I-P_{\Upsilon})h=
\eta(L)\sum_{j\in\Theta\setminus\Upsilon}(h,e_{j})_{H}\,e_{j}=
\sum_{j\in\Theta\setminus\Upsilon}(h,e_{j})_{H}\,\eta(\lambda_j)e_{j}
\end{equation*}
for every $h\in H$ (the convergence holds in $H$), we have the inequality
\begin{equation*}
\|\eta(L)(I-P_{\Upsilon})h\|_{H}^{2}=
\sum_{j\in\Theta\setminus\Upsilon}|(h,e_{j})_{H}\,\eta(\lambda_j)|^{2}\leq
\|h\|_{H}^{2}\cdot\sup_{j\in\Theta\setminus\Upsilon}|\eta(\lambda_j)|^{2},
\end{equation*}
which gives \eqref{proof-est2}. Now \eqref{proof-est1}--\eqref{proof-est2} yield the estimate \eqref{th-estimate-series}, which implies the required unconditional convergence.
\end{proof}

\begin{remark}\label{rem-comp-norms}
Assume that the norms in $H$ and $N$ are compatible on the linear space $H\cap N$. (By definition \cite[Chapter~I, Section~2.2]{GelfandShilov58}, this compatibility means the following: if a sequence lies in $H\cap N$ and converges to zero in one of these norms and is a Cauchy sequence with respect to the second norm, then it converges to zero in the second norm.) Let $R$ be a bounded linear operator on $H$ (specifically, $R:u\mapsto\omega(L)u$ where $u\in H$, as in Theorem~\ref{th-basic-series}). Then the inclusion $R(H)\subseteq N$ implies that $R$ acts continuously from $H$ to~$N$. Indeed, suppose that $R(H)\subseteq N$. Then, owing to the closed graph theorem, the operator $R:H\to\widetilde{N}$ is bounded if and only if it is closed; here, $\widetilde{N}$ is the completion of the normed space~$N$. We therefore have to prove that this operator is closable. Assume that a sequence $(u_{k})_{k=1}^{\infty}\subset H$ satisfies the following two conditions: $u_{k}\to0$ in $H$ and
\begin{equation}\label{rem6.3-convegence}
Ru_{k}\to v\;\;\mbox{in}\;\;\widetilde{N}\;\;\mbox{for certain}\;\;
v\in\widetilde{N},
\end{equation}
as $k\to\infty$. Then $Ru_{k}\to0$ in $H$ because $R$ is bounded on~$H$. Besides, $(Ru_{k})_{k=1}^{\infty}$ is a Cauchy sequence in $N$. Hence, $Ru_{k}\to0$ in $N$ because the norms in $N$ and $H$ are compatible on $H\cap N$. Thus, $v=0$ in view of \eqref{rem6.3-convegence}, which means that the operator $R:H\to\widetilde{N}$ is closable.
\end{remark}

Theorem~\ref{th-basic-series} contains M.~Krein's result \cite[Theorem~4]{Krein47} according to which the series \eqref{f-series-separable} converges in $N$ for every $f\in L(H)$ if $L$ is a compact self-adjoint operator in $H$ and if $L$ acts continuously from $H$ to a Banach space $N$. The latter theorem generalizes (to abstract operators) the Hilbert\,--\,Schmidt theorem about the uniform decomposability of sourcewise representable functions in eigenfunctions of a symmetric integral operator.

If $L$ is a positive definite self-adjoint operator with discrete spectrum and if $\omega(t)\equiv t^{-\delta}$ and $\eta(t)\equiv t^{-\tau}$ for certain numbers $\delta,\tau\geq0$ and if $R:=L^{-\delta}$ acts continuously from $H$ to a Banach space $N$, Krasnosel'ski\u{\i} and Pustyl'nik \cite[Theorem~22.1]{KrasnoselskiiZabreikoPustylnikSobolevskii76} proved that
\begin{equation*}
\biggl\|f-\sum_{j=1}^{k}(f,e_j)_{H}\,e_j\biggr\|_{N}=
o(\lambda_{k}^{-\tau})\;\;\,\mbox{as}\;\;k\to\infty
\end{equation*}
for any $f$ from the domain of $L^{\delta+\tau}$ (the Hilbert space $H$ is assumed to be separable). Here, the eigenvalues $\lambda_{k}>0$ of $L$ are numbered so that $\lambda_{k}\leq\lambda_{k+1}$. Formula \eqref{th-estimate-series} implies a more general result for normal operators with discrete spectrum.

\section{H\"ormander spaces and elliptic operators}\label{sec3}

We will use them to estimate the degree of the convergence of expansions in eigenfunctions of some elliptic operators given on a closed manifold. Such spaces were introduced over $\mathbb{R}^{n}$ and investigated by L.~H\"ormander \cite[Section 2.2]{Hermander63}. Their smoothness index (or differentiation order) is a general enough function of the frequency variables, which allows us to obtain finer estimates than those \cite[Subsection 6.1\,a]{Agranovich94} received  with the help of classical function spaces such as Sobolev spaces, whose smoothness index is a number. We use a broad class of isotropic inner product H\"{o}rmander spaces that admit a reasonable definition on smooth closed manifolds. The class was introduced and investigated in \cite{MikhailetsMurach13UMJ3}, \cite[Section 2.4.2]{MikhailetsMurach14}, and \cite{MikhailetsMurach15ResMath1}. It has various applications to elliptic operators \cite[Section 2.4.3]{MikhailetsMurach14} and elliptic boundary-value problems \cite{AnopDenkMurach21, AnopChepurukhinaMurach21Axioms}.

This class is formed by certain function spaces $H^{\alpha}$ whose smoothness index $\alpha$ ranges over a set denoted by $\mathrm{OR}$. By definition, $\mathrm{OR}$ consists of all Borel functions
$\alpha:\nobreak[1,\infty)\rightarrow(0,\infty)$ such that
\begin{equation*}
c^{-1}\leq\frac{\alpha(\lambda t)}{\alpha(t)}\leq c
\quad\mbox{whenever}\;\;t\geq1\;\;\mbox{and}\;\;1\leq\lambda\leq b,
\end{equation*}
with the numbers $b>1$ and $c\geq1$ being independent of $t$ and $\lambda$ (but may depend on $\alpha$). Such functions were introduced by V.~G.~Avakumovi\'c \cite{Avakumovic36}, are called OR-varying (or O-regularly varying) at infinity, and are well investigated \cite{BinghamGoldieTeugels89, BuldyginIndlekoferKlesovSteinebach18}.

The class $\mathrm{OR}$ admits the following description \cite[Theorem 2.2.7]{BinghamGoldieTeugels89}: $\alpha\in\mathrm{OR}$ if and only if
\begin{equation}\label{OR-description}
\alpha(t)=
\exp\Biggl(\beta(t)+
\int\limits_{1}^{t}\frac{\gamma(\tau)}{\tau}\;d\tau\Biggr)
\quad\mbox{whenever}\;\;t\geq1
\end{equation}
for some bounded Borel functions $\beta,\gamma:[1,\infty)\to\mathbb{R}$.

For every function $\alpha\in\mathrm{OR}$ there exist numbers $s_{0},s_{1}\in\mathbb{R}$, with $s_{0}\leq s_{1}$, and numbers $c_{0},c_{1}>0$ such that
\begin{equation}\label{RO-inequalities}
c_{0}\lambda^{s_{0}}\leq\frac{\alpha(\lambda t)}{\alpha(t)}\leq
c_{1}\lambda^{s_{1}}\quad\mbox{whenever}\quad t\geq1\;\;\mbox{and}\;\;\lambda\geq1.
\end{equation}
Given $\alpha\in\mathrm{OR}$, we let $s_{\ast}(\alpha)$ denote the supremum of the set of all numbers $s_{0}$ that the left-hand inequality in \eqref{RO-inequalities} holds true (for certain $c_{0}$ depending on $s_{0}$), and we let $s^{\ast}(\alpha)$ denote the infimum of the set of all numbers $s_{1}$ that the right-hand inequality in \eqref{RO-inequalities} holds true (for certain $c_{1}$ depending on $s_{1}$). Of course, $-\infty<s_{\ast}(\alpha)\leq s^{\ast}(\alpha)<\infty$. The numbers $s_{\ast}(\alpha)$ and $s^{\ast}(\alpha)$ are equal to the lower and upper Orlicz--Matuszewska indices of $\alpha$, resp. These facts follow from \cite[Theorem 2.1.7 and Proposition 2.2.1]{BinghamGoldieTeugels89}.

For instance, every continuous function $\alpha:[1,\infty)\rightarrow(0,\infty)$ such that
\begin{equation*}
\alpha(t)=t^{s}(\log t)^{s_{1}}(\log\log t)^{s_{2}}\ldots (\underbrace{\log\ldots\log}_{k\;\mbox{\tiny{times}}} t)^{s_{k}}\quad\mbox{whenever}\quad t\gg1
\end{equation*}
belongs to $\mathrm{OR}$; here, $1\leq k\in\mathbb{Z}$ and $s,s_{1},\ldots,s_{k}\in\mathbb{R}$. It is well known that  $s_{\ast}(\alpha)=s^{\ast}(\alpha)=s$ for this function.

An example of a function $\alpha\in\mathrm{OR}$ with the different Orlicz--Matuszewska indices is given by formula \eqref{OR-description} in which
\begin{equation}\label{example-gamma}
\gamma(\tau):=\left\{
\begin{array}{ll}
r&\hbox{if}\;\;\tau\in[\theta_{1}\cdots\theta_{2j-1},
\theta_{1}\cdots\theta_{2j-1}\theta_{2j}]\;\,\hbox{for certain integer}\;j\geq1,\\
s&\hbox{otherwise}
\end{array}\right.
\end{equation}
provided that $r<s$, $1=\theta_{1}<\theta_{2}<\theta_{3}<\dots$, and $\theta_{k}\to\infty$ as $k\to\infty$. Then $s_{\ast}(\alpha)=r$ and $s^{\ast}(\alpha)=s$, as is noticed, e.g., in \cite[Section~3]{AnopChepurukhinaMurach21Axioms}.

Let $\alpha\in\mathrm{OR}$ and $1\leq n\in\mathbb{Z}$. By definition, the linear space $H^{\alpha}(\mathbb{R}^{n})$ consists of all tempered distributions $w$ on $\mathbb{R}^{n}$ that their Fourier transform $\widehat{w}:=\mathcal{F}w$ is locally Lebesgue integrable over $\mathbb{R}^{n}$ and satisfies the condition
$$
\int\limits_{\mathbb{R}^{n}}\alpha^{2}(\langle\xi\rangle)\,
|\widehat{w}(\xi)|^{2}\,d\xi<\infty.
$$
As usual, $\langle\xi\rangle:=(1+|\xi|^{2})^{1/2}$ is the smoothed absolute value of $\xi\in\mathbb{R}^{n}$. The space $H^{\alpha}(\mathbb{R}^{n})$ is endowed with the inner product
$$
(w_{1},w_{2})_{H^{\alpha}(\mathbb{R}^{n})}:=
\int\limits_{\mathbb{R}^{n}}\alpha^{2}(\langle\xi\rangle)\,
\widehat{w_{1}}(\xi)\,\overline{\widehat{w_{2}}(\xi)}\,d\xi
$$
and the corresponding norm. This space is an isotropic Hilbert case of the H\"ormander spaces $\mathcal{B}_{p,k}$ considered in \cite[Section 2.2]{Hermander63} and \cite[Section 10.1]{Hermander83}. Namely, $H^{\alpha}(\mathbb{R}^{n})=\mathcal{B}_{2,k}$ provided that $k(\xi)=\alpha(\langle\xi\rangle)$ whenever $\xi\in\mathbb{R}^{n}$.

Consider a version of the space $H^{\alpha}(\mathbb{R}^{n})$ for smooth closed manifolds. In the sequel, $\Gamma$ is a closed (i.e. compact and boundaryless) manifold of dimension $n\geq1$ and class $C^{\infty}$. Suppose that a positive $C^{\infty}$-density $dx$ is given on $\Gamma$. We arbitrarily choose a finite atlas from the $C^{\infty}$-structure on $\Gamma$; let this atlas be formed by $\varkappa$ local charts $\pi_{j}: \mathbb{R}^{n}\leftrightarrow \Gamma_{j}$, with $j=1,\ldots,\varkappa$.
Here, the open sets $\Gamma_{1},\ldots,\Gamma_{\varkappa}$ form a covering of $\Gamma$. We also arbitrarily choose functions $\chi_j\in C^{\infty}(\Gamma)$, with $j=1,\ldots,\varkappa$, such that $\chi_{1}(x)+\cdots+\chi_{\varkappa}(x)=1$ whenever $x\in\Gamma$ and that $\mathrm{supp}\,\chi_j\subset \Gamma_j$ whenever $1\leq j\leq\varkappa$. Thus, these functions form a partition of unity on $\Gamma$ subordinate to the above covering.

By definition, the linear space $H^{\alpha}(\Gamma)$ consists of all distributions $f$ on $\Gamma$ such that $(\chi_{j}f)\circ\pi_{j}\in H^{\alpha}(\mathbb{R}^{n})$ for each $j\in\{1,\ldots,\varkappa\}$. Here, $(\chi_{j}f)\circ\pi_{j}$ is the representation of the distribution $\chi_{j}f$ in the local chart $\pi_{j}$. The space $H^{\alpha}(\Gamma)$ is endowed with the inner product
\begin{equation*}
(f_{1},f_{2})_{H,\alpha}:=
\sum_{j=1}^{\varkappa}\,((\chi_{j}f_{1})\circ\pi_{j},
(\chi_{j}f_{2})\circ\pi_{j})_{H^{\alpha}(\mathbb{R}^{n})}
\end{equation*}
and the corresponding norm $\|f\|_{H,\alpha}:=(f,f)_{H,\alpha}^{1/2}$. This space is complete (i.e. Hilbert) and separable and does not depend up to equivalence of norms on the choice of the atlas and partition of unity on $\Gamma$; the set $C^{\infty}(\Gamma)$ is dense in $H^{\alpha}(\Gamma)$. These properties are due to \cite[Theorem~2.21]{MikhailetsMurach14}.

If $\alpha(t)\equiv t^{s}$ for some $s\in\mathbb{R}$, then $H^{\alpha}(G)$, where $G=\mathbb{R}^{n}$ or $G=\Gamma$, becomes the inner product Sobolev space $W^{s}_{2}(G)$ of order $s$.

We also need an equivalent definition of the space $H^{\alpha}(\Gamma)$, with $\alpha\in\mathrm{OR}$, in terms of a function of some elliptic pseudodifferential operators (PsDOs). Following \cite[Section~2.1]{Agranovich94}, we let $\Psi_{\mathrm{ph}}^{m}(\Gamma)$ denote the class of all classical (i.e. polyhomogeneous) PsDOs on $\Gamma$ of order $m\in\mathbb{R}$. Suppose that $m>0$ and that a PsDO $A\in\Psi_{\mathrm{ph}}^{m}(\Gamma)$ is elliptic on $\Gamma$, i.e. $a_{0}(x,\xi)\neq0$ whenever $x\in\Gamma$ and $0\neq\xi\in T_{x}^{\ast}\Gamma$, with $a_{0}(x,\xi)$ denoting the principal symbol of $A$ and with $T_{x}^{\ast}\Gamma$ standing (as usual) for the cotangent space to $\Gamma$ at $x$. We may and will consider $A$ as a closed unbounded operator in the Hilbert Lebesgue space $L_{2}(\Gamma):=L_{2}(\Gamma,dx)$
and with the domain $\mathrm{Dom}\,A=W^{m}_{2}(\Gamma)$ (see \cite[Theorem~2.3.5]{Agranovich94}). Suppose that $A$ is a self-adjoint operator in $L_{2}(\Gamma)$ and that $\sigma(A)\subseteq[1,\infty)$. Then the operator $\alpha(A^{1/m})$ is well defined in the Hilbert space $L_{2}(\Gamma,dx)$ via Spectral Theorem as the Borel function $\alpha(t^{1/m})$ of $A$.

\begin{proposition}\label{prop-equiv-def}
Let $\alpha\in\mathrm{OR}$. Then the norm in the space $H^{\alpha}(\Gamma)$ is equivalent to the norm
\begin{equation}\label{equivalent-norm}
f\mapsto\|\alpha(A^{1/m})f\|_{L_{2}(\Gamma)}
\end{equation}
on the dense set $C^{\infty}(\Gamma)$. Thus, $H^{\alpha}(\Gamma)$ coincides with the completion of $C^{\infty}(\Gamma)$ with respect to the norm \eqref{equivalent-norm}. Hence, if the function $1/\alpha$ is bounded on $[1,\infty)$, then $H^{\alpha}(\Gamma)$ is the domain of the operator
$\alpha(A^{1/m})$ and this operator sets an isomorphism between $H^{\alpha}(\Gamma)$ and $L_{2}(\Gamma)$.
\end{proposition}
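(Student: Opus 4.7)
The plan is to reduce the proposition to two ingredients already at hand: the classical Sobolev characterization of real powers of an elliptic operator (essentially Seeley's theorem), and the function-parameter interpolation realization of $H^{\alpha}(\Gamma)$ developed in \cite{MikhailetsMurach14}.

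First I would set $B:=A^{1/m}$, which, via the Spectral Theorem applied to the positive self-adjoint operator $A$ with $\sigma(A)\subseteq[1,\infty)$, is a positive self-adjoint operator in $L_{2}(\Gamma)$ with $\sigma(B)\subseteq[1,\infty)$. Seeley's theorem on complex powers of elliptic PsDOs then identifies $B$ as a classical first-order elliptic PsDO on $\Gamma$, and more generally places $B^{s}$ in $\Psi_{\mathrm{ph}}^{s}(\Gamma)$ for every $s\in\mathbb{R}$, realizing an isomorphism between $W^{s+r}_{2}(\Gamma)$ and $W^{r}_{2}(\Gamma)$ for each $r$. In particular, the norm $f\mapsto\|B^{s}f\|_{L_{2}(\Gamma)}$ is equivalent to $\|\cdot\|_{W^{s}_{2}(\Gamma)}$ on $C^{\infty}(\Gamma)$, which settles the proposition in the Sobolev case $\alpha(t)\equiv t^{s}$.

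For arbitrary $\alpha\in\mathrm{OR}$, I would use the inequalities \eqref{RO-inequalities} to fix numbers $s_{0}<s_{\ast}(\alpha)\leq s^{\ast}(\alpha)<s_{1}$ so that $c_{0}t^{s_{0}}\leq\alpha(t)\leq c_{1}t^{s_{1}}$ on $[1,\infty)$. Via the Borel functional calculus this translates into a two-sided spectral comparison between $\alpha(B)$ and the classical powers $B^{s_{0}}$, $B^{s_{1}}$, which are already controlled on the Sobolev scale by the previous step. Invoking the function-parameter interpolation of the Sobolev Hilbert scale from \cite[Section~2.4.3]{MikhailetsMurach14} and \cite{MikhailetsMurach13UMJ3}, with interpolation parameter built from $\alpha$ through the representation \eqref{OR-description}, one identifies the space $\{f\in L_{2}(\Gamma):\alpha(B)f\in L_{2}(\Gamma)\}$ with $H^{\alpha}(\Gamma)$, and shows that $\|\alpha(B)\cdot\|_{L_{2}(\Gamma)}$ is equivalent to $\|\cdot\|_{H,\alpha}$ on $C^{\infty}(\Gamma)$. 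The hard part is precisely this identification: the $H^{\alpha}(\Gamma)$-norm is defined by charts and a partition of unity, whereas $\alpha(B)$ is intrinsic, and bridging the two requires the chart-invariance machinery of \cite{MikhailetsMurach14} that upgrades the local Sobolev estimates at the endpoints $s_{0},s_{1}$ to an interpolation equivalence on the manifold.

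The remaining assertions are then formal. Since $C^{\infty}(\Gamma)$ is dense in $H^{\alpha}(\Gamma)$, the norm equivalence extends by completion, identifying $H^{\alpha}(\Gamma)$ with the completion of $C^{\infty}(\Gamma)$ under $\|\alpha(B)\cdot\|_{L_{2}(\Gamma)}$. If in addition $1/\alpha$ is bounded on $[1,\infty)$, then the Borel functional calculus yields that $\alpha(B)^{-1}$ is bounded on $L_{2}(\Gamma)$; hence $\alpha(B)$ is boundedly invertible, its $L_{2}$-domain coincides with $H^{\alpha}(\Gamma)$, and $\alpha(B)$ provides the asserted isomorphism between $H^{\alpha}(\Gamma)$ and $L_{2}(\Gamma)$.
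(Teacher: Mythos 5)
Your proposal is consistent with the paper, which gives no argument of its own for this proposition but simply attributes it to \cite[Theorem~2.23]{MikhailetsMurach14}; your sketch (Seeley's theorem identifying $A^{s/m}\in\Psi^{s}_{\mathrm{ph}}(\Gamma)$ as an isomorphism on the Sobolev scale, then the interpolation-with-function-parameter realization of $H^{\alpha}(\Gamma)$ between $W^{s_0}_{2}(\Gamma)$ and $W^{s_1}_{2}(\Gamma)$ with $s_0<s_{\ast}(\alpha)\leq s^{\ast}(\alpha)<s_1$) is precisely the route underlying that cited theorem, and your handling of the case where $1/\alpha$ is bounded is the standard formal consequence. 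Since you defer the genuinely hard step (chart-defined $H^{\alpha}(\Gamma)$ versus the intrinsic domain of $\alpha(A^{1/m})$, bridged by interpolation) to the same machinery of \cite{MikhailetsMurach13UMJ3, MikhailetsMurach14} that the paper invokes, your write-up matches the paper's level of justification and contains no gap relative to it.
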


This proposition is due to \cite[Theorem~2.23]{MikhailetsMurach14}. We may put $A:=1-\Delta_{\Gamma}$ and $m=2$, where $\Delta_{\Gamma}$ is the Laplace--Beltrami operator on $\Gamma$, with $\Gamma$ being endowed with the Riemannian metric inducing $dx$.

\section{Mean convergence}\label{sec4}

Given $s\in\mathbb{R}$ and $p\in(1,\infty)$, we let $W^{s}_{p}(\Gamma)$ denote the Sobolev space over $\Gamma$ with the smoothness index $s$ and the integral-exponent $p$ and let $\|\cdot\|_{W,s,p}$ stand for the norm in this space. The space $W^{s}_{p}(\Gamma)$ is defined on the base of  $W^{s}_{p}(\mathbb{R}^{n})$ with the help of the above atlas and partition of unity like the definition of $H^{\alpha}(\Gamma)$ on the base of $H^{\alpha}(\mathbb{R}^{n})$. Recall that the Sobolev space $W^{s}_{p}(\mathbb{R}^{n})$ consists of all tempered distributions $w$ on $\mathbb{R}^{n}$ such that the distribution
$v:=\mathcal{F}^{-1}[\langle\xi\rangle^{s}\,\widehat{w}(\xi)]$ belongs to the Lebesgue space $L_{p}(\mathbb{R}^{n})$, with the norm of $w$ in $W^{s}_{p}(\mathbb{R}^{n})$ being equal to the norm of $u$ in $L_{p}(\mathbb{R}^{n})$. Of course, $\mathcal{F}^{-1}$ stands for the inverse Fourier transform. Note that $W^{0}_{p}(\Gamma)$ coincides with the Lebesgue space $L_{p}(\Gamma):=L_{p}(\Gamma,dx)$, and assume that this holds with equality of norms.

We use the term "mean convergence"\ to refer to the convergence in the normed Sobolev space $W^{\ell}_{p}(\Gamma)$ subject to $0\leq\ell\in\mathbb{Z}$ and $1<p<\infty$, specifically in the Lebesgue space $L_{p}(\Gamma)$.

Let real $m>0$, and let $A\in\Psi_{\mathrm{ph}}^{m}(\Gamma)$. Suppose that the PsDO $A$ is elliptic on $\Gamma$. If $A$ is a normal operator in $L_{2}(\Gamma)$, the separable Hilbert space $L_{2}(\Gamma)$ has an orthonormal basis formed by some eigenfunctions of $A$ (see, e.g., \cite[Section~15.3]{Shubin01}), every eigenfunction of $A$ pertaining to $C^{\infty}(\Gamma)$ and each eigenvalue of $A$ being of finite multiplicity. Then every vector $f\in L_{2}(\Gamma)$ admits the spectral decomposition
\begin{equation}\label{spectral-decomp}
f=\lim_{r\to\infty}
\sum_{\substack{\lambda\in\sigma_{p}(L)\\|\lambda|\leq r}}P(\lambda)f
\end{equation}
the series converging in $L_{2}(\Gamma)$. As usual, $P(\lambda)$ denotes the orthoprojector on the eigenspace of $A$ associated with the eigenvalue~$\lambda$. Note that the sum in \eqref{spectral-decomp} contains only a finite number of terms for every $r>0$. The convergence in $L_{2}(\Gamma)$ implies one in each normed space $L_{p}(\Gamma)$ with $1\leq p<2$; hence, we are interested in conditions for the convergence of \eqref{spectral-decomp} in the mean with index $p>2$.

Given real $m>0$, we let $\mathrm{NE}\Psi_{\mathrm{ph}}^{m}(\Gamma)$ denote the set of all PsDOs of class $\Psi_{\mathrm{ph}}^{m}(\Gamma)$ that are elliptic on $\Gamma$ and normal in $L_{2}(\Gamma)$. Specifically, every elliptic constant-coefficient partial differential operator on $\Gamma$ of order $m$ belongs to $\mathrm{NE}\Psi_{\mathrm{ph}}^{m}(\Gamma)$. Recall that $n=\dim\Gamma\geq1$.

\begin{theorem}\label{th-mean}
Suppose that $0<m\in\mathbb{R}$, $0\leq\ell\in\mathbb{Z}$, $s,p,q\in\mathbb{R}$, and $1<q\leq2<p$. Then the following three conditions are equivalent:
\begin{itemize}
  \item[(i)] $s\geq\ell+n/q-n/p$;
  \item[(ii)] there exists a PsDO $A\in\mathrm{NE}\Psi_{\mathrm{ph}}^{m}(\Gamma)$ that the spectral decomposition \eqref{spectral-decomp} induced by $A$ converges in $W^{\ell}_{p}(\Gamma)$ on the class $W^{s}_{q}(\Gamma)$;
  \item[(iii)] the expansions in eigenfunctions of an arbitrary PsDO from $\mathrm{NE}\Psi_{\mathrm{ph}}^{m}(\Gamma)$ converge unconditionally in $W^{\ell}_{p}(\Gamma)$ on the class $W^{s}_{q}(\Gamma)$.
\end{itemize}
\end{theorem}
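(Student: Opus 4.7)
The plan is to prove the equivalence by establishing the cycle $(i)\Rightarrow(iii)\Rightarrow(ii)\Rightarrow(i)$.

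The implication $(iii)\Rightarrow(ii)$ will follow immediately once we observe that $\mathrm{NE}\Psi_{\mathrm{ph}}^{m}(\Gamma)$ is nonempty, since for any such $A$ the ``radial'' partial sums indexed by $\{j:|\lambda_{j}|\leq r\}$ in \eqref{spectral-decomp} form a cofinal subnet of the net \eqref{net}; taking $A:=(1-\Delta_{\Gamma})^{m/2}$, defined via the functional calculus of the positive self-adjoint Laplace--Beltrami operator, furnishes a concrete member of $\mathrm{NE}\Psi_{\mathrm{ph}}^{m}(\Gamma)$ for every real $m>0$.

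For $(i)\Rightarrow(iii)$, I would fix arbitrary $A\in\mathrm{NE}\Psi_{\mathrm{ph}}^{m}(\Gamma)$ and $f\in W^{s}_{q}(\Gamma)$, and apply Theorem~\ref{th-basic-series} with $H:=L_{2}(\Gamma)$, $N:=W^{\ell}_{p}(\Gamma)$, $L:=A$. As a positive self-adjoint comparison operator I would take $B:=(I+A^{*}A)^{1/2}$; since $A^{*}A\in\Psi_{\mathrm{ph}}^{2m}(\Gamma)$ is elliptic and non-negative, $B$ is a positive self-adjoint elliptic PsDO of order $m$ with $\sigma(B)\subseteq[1,\infty)$, so Proposition~\ref{prop-equiv-def} applies to $B$ and identifies $W^{\sigma}_{2}(\Gamma)$ with the domain of $B^{\sigma/m}$ for each $\sigma\geq 0$. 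Setting
\[
\sigma:=\ell+\tfrac{n}{2}-\tfrac{n}{p},\qquad \tau:=s-\ell-\tfrac{n}{q}+\tfrac{n}{p},
\]
hypothesis (i) guarantees $\sigma,\tau\geq 0$, and I would then take
\[
\omega(\lambda):=(1+|\lambda|^{2})^{-\sigma/(2m)},\qquad \eta(\lambda):=(1+|\lambda|^{2})^{-\tau/(2m)}
\]
on $\sigma(A)$; these are bounded nonvanishing Borel functions, and multiplicativity of the Borel calculus for the normal operator $A$ yields $\omega(A)=B^{-\sigma/m}$ and $(\omega\eta)(A)=B^{-(\sigma+\tau)/m}$. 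By Proposition~\ref{prop-equiv-def} and the classical Sobolev embedding $W^{\sigma}_{2}(\Gamma)\hookrightarrow W^{\ell}_{p}(\Gamma)$ (valid precisely because $\sigma-n/2=\ell-n/p$), the operator $R:=\omega(A):L_{2}(\Gamma)\to W^{\ell}_{p}(\Gamma)$ is bounded. The image of $(\omega\eta)(A)$ equals $W^{\sigma+\tau}_{2}(\Gamma)=W^{s-n/q+n/2}_{2}(\Gamma)$, which contains $W^{s}_{q}(\Gamma)$ via the Sobolev embedding $W^{s}_{q}(\Gamma)\hookrightarrow W^{s-n/q+n/2}_{2}(\Gamma)$ available since $q\leq 2$. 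Hence $f$ lies in the image of $(\omega\eta)(A)$, and Theorem~\ref{th-basic-series} delivers the unconditional convergence in $W^{\ell}_{p}(\Gamma)$.

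For $(ii)\Rightarrow(i)$, the partial sums in \eqref{spectral-decomp} are smooth functions which, by construction, converge to $f$ in $L_{2}(\Gamma)$ and, under (ii), also converge in $W^{\ell}_{p}(\Gamma)$. Since both $L_{2}(\Gamma)$ and $W^{\ell}_{p}(\Gamma)$ embed continuously into $\mathcal{D}'(\Gamma)$, the two limits coincide, so every $f\in W^{s}_{q}(\Gamma)$ lies in $W^{\ell}_{p}(\Gamma)$. The closed graph theorem upgrades this set inclusion to a continuous embedding $W^{s}_{q}(\Gamma)\hookrightarrow W^{\ell}_{p}(\Gamma)$, and the sharpness of Sobolev embeddings on the closed $n$-manifold $\Gamma$ then forces $s-n/q\geq\ell-n/p$, which is (i). The most delicate step will be the functional-calculus identification $\omega(A)=B^{-\sigma/m}$ used in $(i)\Rightarrow(iii)$: this is precisely where the normality hypothesis is exploited, and it is what allows us to transfer the Sobolev-scale mapping properties from the self-adjoint positive operator $B$ to the original normal elliptic operator $A$; once it is established, the rest of the argument is a matching of exponents in the Sobolev embeddings against the hypotheses of Theorem~\ref{th-basic-series}.
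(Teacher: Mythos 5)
Your proposal is correct and takes essentially the same route as the paper: (i)$\Rightarrow$(iii) is obtained from Theorem~\ref{th-basic-series} with the bounded operator $R=(I+A^{*}A)^{-\sigma/(2m)}$ acting from $L_{2}(\Gamma)$ onto $W^{\sigma}_{2}(\Gamma)\hookrightarrow W^{\ell}_{p}(\Gamma)$, and (ii)$\Rightarrow$(i) from the induced inclusion $W^{s}_{q}(\Gamma)\subseteq W^{\ell}_{p}(\Gamma)$ together with the known sharpness of Sobolev embeddings (which the paper makes explicit by localizing to a ball and invoking a Besov-space property from Triebel). The remaining differences are cosmetic: you apply the abstract theorem directly to the normal operator $A$ with $\omega,\eta$ expressed through $1+|\lambda|^{2}$ and a nontrivial $\eta$, whereas the paper applies it to the self-adjoint operator $L=I+A^{*}A$ with $\eta\equiv1$ and then remarks that every orthonormal eigenbasis of $A$ is an eigenbasis of $L$.
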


\begin{proof}
We will first prove that $\mathrm{(i)}\Rightarrow\mathrm{(iii)}$. Assuming (i) to hold true, we arbitrarily choose a PsDO $A\in\mathrm{NE}\Psi_{\mathrm{ph}}^{m}(\Gamma)$ and put $L:=(I+A^{\ast}A)$, with $I$ denoting the identity operator on  $L_{2}(\Gamma)$. Let $\sigma:=\ell+n/2-n/p>0$. Since $L\in\mathrm{NE}\Psi_{\mathrm{ph}}^{2m}(\Gamma)$ and $\sigma(L)\subseteq[1,\infty)$, the operator $L^{\sigma/(2m)}$ is well defined in $L_{2}(\Gamma)$ as a power function of $L$ and sets an isomorphism between $W^{\sigma}_{2}(\Gamma)$ and $L_{2}(\Gamma)$; see \cite[Corollary~5.3.2]{Agranovich94}. The inverse of $L$, i.e. the operator $L^{-\sigma/(2m)}$, acts continuously from $H=L_{2}(\Gamma)$ to $N:=W^{\ell}_{p}(\Gamma)$ because of the continuous embedding $W^{\sigma}_{2}(\Gamma)\hookrightarrow W^{\ell}_{p}(\Gamma)$. This embedding follows directly from its well-known analog for Sobolev spaces over $\mathbb{R}^{n}$ (see, e.g., \cite[Theorem 2.8.1(b)]{Triebel95}). Using Theorem~\ref{th-basic-series}, we put $\omega(t):=t^{-\sigma/(2m)}$ and $\eta(t):=1$ whenever $t\geq1$ and remark that $W^{\sigma}_{2}(\Gamma)$ is the image of the operator $(\omega\eta)(L)$. We conclude by this theorem that the expansions in eigenfunctions of $L$ converge unconditionally in $W^{\ell}_{p}(\Gamma)$ on the class $W^{\sigma}_{2}(\Gamma)$. Observe that $W^{s}_{q}(\Gamma)\hookrightarrow W^{\sigma}_{2}(\Gamma)$ because $s\geq\sigma+n/q-n/2$ by (i). This yields (iii) since all eigenfunctions of $A$ are eigenfunctions of $L$ as well.
We have proved that $\mathrm{(i)}\Rightarrow\mathrm{(iii)}$.

The implication $\mathrm{(iii)}\Rightarrow\mathrm{(ii)}$ is obvious.

It remains to prove that $\mathrm{(ii)}\Rightarrow\mathrm{(i)}$. Assuming (ii) to hold true, we obtain the embedding of $W^{s}_{q}(\Gamma)$ in $W^{\ell}_{p}(\Gamma)$. The embedding implies (i), which is considered to be  a known fact. Specifically, this fact follows from \cite[p.~60, property (ii)]{Triebel06}. Namely, the above embedding implies that  $W^{s}_{q}(G)\hookrightarrow W^{\ell}_{p}(G)$ for any open ball $G$ in $\mathbb{R}^{n}$. (Recall that $W^{s}_{q}(G)$, e.g., consists of the restrictions of all distributions $w\in W^{s}_{q}(\mathbb{R}^{n})$ to $G$.) If (i) was not true, i.e. $s+\varepsilon=\ell+n/q-n/p$ for some $\varepsilon>0$, then the Besov space $B^{s+\varepsilon}_{q,\theta}(G)$ (as a part of $W^{s}_{q}(G)$) would be embedded in $W^{\ell}_{p}(G)$ for each $\theta\geq1$. However, by \cite[p.~60, property (ii)]{Triebel06}, the last embedding is equivalent to $\theta\leq p$. This contradiction shows that  $\mathrm{(ii)}\Rightarrow\mathrm{(i)}$.
\end{proof}

Let $m>0$, and let $A\in\mathrm{NE}\Psi_{\mathrm{ph}}^{m}(\Gamma)$. Consider an orthonormal basis $e:=(e_{j})_{j=1}^{\infty}$ in $L_{2}(\Gamma)$ formed by eigenfunctions $e_{j}\in C^{\infty}(\Gamma)$ of $A$. Let $\lambda_{j}$ denote the eigenvalue of $A$ such that $Ae_j=\lambda_{j}e_j$. We enumerate the eigenfunctions so that $|\lambda_{j}|\leq|\lambda_{j+1}|$ whenever $j\geq1$. Then
\begin{equation}\label{eigenvalue-asymptotic}
|\lambda_{j}|\sim\widetilde{c}\,j^{m/n}\quad\mbox{as}\quad j\to\infty,
\end{equation}
where $\widetilde{c}$ is a certain positive number that does not depend on~$j$ (see, e.g., \cite[Section~15.3]{Shubin01}). According to Theorem~\ref{th-mean}, the series \eqref{f-series-separable}, where $H=L_{2}(\Gamma)$, converges unconditionally in $W^{\ell}_{p}(\Gamma)$ on the class $W^{\ell+n/2-n/p}_{2}(\Gamma)$. This class is the broadest one among the spaces $W^{s}_{q}(\Gamma)$ indicated by Theorem~\ref{th-mean} as classes of convergence of the above series. Using the H\"ormander spaces $H^{\alpha}(\Gamma)\subset W^{\ell+n/2-n/p}_{2}(\Gamma)$ as classes of the convergence, we can estimate its degree.

\begin{theorem}\label{th-mean-degree}
Let $0\leq\ell\in\mathbb{Z}$, $2<p\in\mathbb{R}$, and $\alpha\in\mathrm{OR}$. Suppose that
\begin{equation*}
h(t):=t^{\ell+n/2-n/p}\,(\alpha(t))^{-1}\to0
\quad\mbox{as}\quad t\to\infty
\end{equation*}
and that the function $h$ decreases on $[1,\infty)$. Then
\begin{equation}\label{mean-degree-est-lambda}
\biggl\|f-\sum_{j:|\lambda_{j}|\leq\lambda}
(f,e_j)_{H}\,e_j\biggr\|_{W,\ell,p}
\leq c\cdot\|f\|_{H,\alpha}\cdot h(\lambda^{1/m})
\end{equation}
and
\begin{equation}\label{mean-degree-est}
\biggl\|f-\sum_{j=1}^{k}(f,e_j)_{H}\,e_j\biggr\|_{W,\ell,p}
\leq c\cdot\|f\|_{H,\alpha}\cdot h(k^{1/n})
\end{equation}
for all $f\in H^{\alpha}(\Gamma)$, $\lambda\geq1$, and integer-valued $k\geq1$. Here, $c$ is a certain positive number that does not depend on $f$, $\lambda$, and $k$.
\end{theorem}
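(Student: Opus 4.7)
The plan is to specialize Theorem~\ref{th-basic-series} to the present setting, choosing $\omega$ so that $\omega(L)$ absorbs the embedding loss from $L_{2}(\Gamma)$ into $W^{\ell}_{p}(\Gamma)$ and $\eta$ so that $\eta(L)$ encodes the decay supplied by membership in the class $H^{\alpha}(\Gamma)$. Following the setup in the proof of Theorem~\ref{th-mean}, I would put $H:=L_{2}(\Gamma)$, $N:=W^{\ell}_{p}(\Gamma)$ and $L:=I+A^{\ast}A$, so that $L\in\mathrm{NE}\Psi_{\mathrm{ph}}^{2m}(\Gamma)$ is self-adjoint with $\sigma(L)\subseteq[1,\infty)$ and $Le_{j}=\mu_{j}e_{j}$ where $\mu_{j}:=1+|\lambda_{j}|^{2}$. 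With $\sigma:=\ell+n/2-n/p>0$, I define, on $[1,\infty)$, the positive bounded Borel functions $\omega(t):=t^{-\sigma/(2m)}$ and $\eta(t):=h(t^{1/(2m)})$; note that $\eta$ is bounded because $h$ tends to zero, and $(\omega\eta)(t)=1/\alpha(t^{1/(2m)})$.

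Next I would verify the hypotheses of Theorem~\ref{th-basic-series}. The operator $R:=\omega(L)=L^{-\sigma/(2m)}$ maps $L_{2}(\Gamma)$ continuously into $W^{\ell}_{p}(\Gamma)$ by the same chain of spectral-power identifications and Sobolev embeddings used in the proof of Theorem~\ref{th-mean}. Since $h(t)\to 0$ while $t^{\sigma}\to\infty$, one has $\alpha(t)\to\infty$, so $1/\alpha$ is bounded; Proposition~\ref{prop-equiv-def}, applied with $L$ in place of $A$, then tells me that $\alpha(L^{1/(2m)})$ is an isomorphism from $H^{\alpha}(\Gamma)$ onto $L_{2}(\Gamma)$. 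For each $f\in H^{\alpha}(\Gamma)$, the vector $g:=\alpha(L^{1/(2m)})f\in L_{2}(\Gamma)$ satisfies $(\omega\eta)(L)g=f$ and $\|g\|_{H}\leq c_{0}\|f\|_{H,\alpha}$ with $c_{0}$ independent of $f$, placing $f$ in the image of $(\omega\eta)(L)$ as required.

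Invoking the estimate \eqref{th-estimate-series}, using $r_{g,e}(\Upsilon)\leq 1$, and absorbing constants, I obtain for every $\Upsilon\in\mathcal{Q}$ the bound
\begin{equation*}
\biggl\|f-\sum_{j\in\Upsilon}(f,e_{j})_{H}\,e_{j}\biggr\|_{W,\ell,p}\leq
C\,\|f\|_{H,\alpha}\,\sup_{j\in\Theta\setminus\Upsilon}h(\mu_{j}^{1/(2m)}).
\end{equation*}
For \eqref{mean-degree-est-lambda} I would take $\Upsilon:=\{j:|\lambda_{j}|\leq\lambda\}$: for $j\notin\Upsilon$ one has $\mu_{j}>\lambda^{2}$, so $\mu_{j}^{1/(2m)}>\lambda^{1/m}$, and the assumed monotonicity of $h$ yields the claim at once. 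For \eqref{mean-degree-est} I would take $\Upsilon:=\{1,\ldots,k\}$ and use the Weyl-type asymptotic \eqref{eigenvalue-asymptotic} to extract a constant $c'>0$ with $\mu_{j}^{1/(2m)}\geq c'k^{1/n}$ for all $j>k$ and all $k\geq 1$ (absorbing the finitely many initial eigenvalues into the constant).

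The main obstacle I anticipate is closing the final comparison in \eqref{mean-degree-est}: if the constant $c'$ coming from \eqref{eigenvalue-asymptotic} happens to be less than $1$, monotonicity of $h$ no longer suffices to replace $h(c'k^{1/n})$ by a multiple of $h(k^{1/n})$. The resolution is the observation that $h(t)=t^{\sigma}/\alpha(t)$ is a quotient of OR-varying functions and therefore itself belongs to $\mathrm{OR}$; hence the defining two-sided inequality from Section~\ref{sec3} guarantees $h(c't)\leq C'h(t)$ uniformly in $t\geq 1$. This invocation of the OR-regularity of $h$ is the only nontrivial ingredient beyond a routine specialization of Theorem~\ref{th-basic-series}.
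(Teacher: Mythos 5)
Your proposal is correct and takes essentially the same route as the paper's proof: the same choices $L=I+A^{\ast}A$, $\omega(t)=t^{-\sigma/(2m)}$, $\eta(t)=h(t^{1/(2m)})$, the isomorphism supplied by Proposition~\ref{prop-equiv-def}, the bound \eqref{th-estimate-series} of Theorem~\ref{th-basic-series} with the embedding constant from the proof of Theorem~\ref{th-mean}, and the passage from $|\lambda_{k}|$ to $k^{1/n}$ via \eqref{eigenvalue-asymptotic} combined with the OR-regularity of $h$. The only differences are cosmetic: you make explicit the boundedness of $1/\alpha$ and the handling of the constant $c'<1$, which the paper compresses into the weak equivalences ``$\asymp$ due to $h\in\mathrm{OR}$ and \eqref{eigenvalue-asymptotic}''.
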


\begin{proof}
Put $\sigma:=l+n/2-n/p$, $L:=(I+A^{\ast}A)$, and $\omega(t):=t^{-\sigma/(2m)}$ and $\eta(t):=h(t^{1/(2m)})$ whenever $t\geq1$. Since $(\omega\eta)(t)=(\alpha(t^{1/(2m)}))^{-1}$ whenever $t\geq1$, the operator $(\omega\eta)(L)$ sets an isomorphism between $L_{2}(\Gamma)$ and $H^{\alpha}(\Gamma)$ due to Proposition~\ref{prop-equiv-def}. Hence, according to Theorem~\ref{th-basic-series} and since $h$ decreases, we have the estimates
\begin{equation}\label{mean-degree-est-lambda-proof}
\biggl\|f-\sum_{j:\lambda_{j}'\leq1+\lambda^{2}}
(f,e_j)_{H}\,e_j\biggr\|_{W,\ell,p}\leq c_{0}\cdot\|g\|_{H}\cdot
\eta(1+\lambda^{2})
\end{equation}
and
\begin{equation}\label{mean-degree-est-proof}
\biggl\|f-\sum_{j=1}^{k}(f,e_j)_{H}\,e_j\biggr\|_{W,\ell,p}
\leq c_{0}\cdot\|g\|_{H}\cdot\eta(\lambda_{k}'),
\end{equation}
where $c_{0}$ is the norm of $\omega(L)$ considered as a bounded operator from $H=L_{2}(\Gamma)$ to $N=W^{\ell}_{p}(\Gamma)$ (see the proof of Theorem~\ref{th-mean}), $g:=((\omega\eta)(L))^{-1}f$, and the number $\lambda_{j}'$ satisfies $Le_{j}=\lambda_{j}'e_{j}$, i.e. $\lambda_{j}'=1+|\lambda_{j}|^{2}$. Hence, $\|g\|_{H}\asymp\|f\|_{H,\alpha}$, $\eta(1+\lambda^{2})\asymp h(\lambda^{1/m})$ as $\lambda\geq1$, and
\begin{equation*}
\eta(\lambda_{k}')=h((1+|\lambda_{k}|^{2})^{1/(2m)})\asymp h(k^{1/n})
\quad\mbox{as}\quad k\geq1,
\end{equation*}
due to $h\in\mathrm{OR}$ and \eqref{eigenvalue-asymptotic}. (As usual, the symbol $\asymp$ means the weak equivalence of positive values.) Thus, \eqref{mean-degree-est-lambda-proof} and \eqref{mean-degree-est-proof} imply the required estimates \eqref{mean-degree-est-lambda} and \eqref{mean-degree-est}, resp.
\end{proof}

\begin{remark}\label{norm-H-alpha}
If $A$ is a positive definite operator in $L_{2}(\Gamma)$, we have the equivalence of norms
\begin{equation*}
\|f\|_{H,\alpha}\asymp\biggl(\,\sum_{j=1}^{\infty}
\alpha^{2}(j^{1/n})\,|(f,e_j)_{H}|^{2}\biggr)^{1/2}\asymp
\biggl(\,\sum_{j=1}^{\infty}
\alpha^{2}((1+\lambda_{j})^{1/m})\,|(f,e_j)_{H}|^{2}\biggr)^{1/2},
\end{equation*}
which follows from Proposition~\ref{prop-equiv-def} (cf. \cite[Theorem~2.7]{MikhailetsMurach14}).
\end{remark}

Let us consider three examples of a function $\alpha$ that satisfy hypotheses of Theorem~\ref{th-mean-degree}. As above,  $0\leq\ell\in\mathbb{Z}$ and $2<p\in\mathbb{R}$. We arbitrarily choose a number $\varepsilon>0$.

Putting $\alpha(t):=t^{l+\varepsilon+n/2-n/p}$ whenever $t\geq1$, we obtain the power estimates \eqref{mean-degree-est-lambda} and \eqref{mean-degree-est} with $h(\lambda^{1/m})=\lambda^{-\varepsilon/m}$ and $h(k^{1/n})=k^{-\varepsilon/n}$, resp. In this case, $H^{\alpha}(\Gamma)$ is a Sobolev space.

Taking $s:=l+n/2-n/p$ and $r:=s+\varepsilon$ in \eqref{example-gamma} and defining $\alpha$ by formula \eqref{OR-description} with $\beta(t)\equiv1$, we receive the H\"ormander space $H^{\alpha}(\Gamma)$ that is not a part of the union of all Sobolev spaces $W^{s}_{2}(\Gamma)$ such that $s>\ell+n/2-n/p$. In this case, $h(t)\searrow0$ as $t\to\infty$.

The third example is given by any function $\alpha\in\mathrm{OR}$ such that
\begin{equation*}
\alpha(t):=t^{l+n/2-n/p}\,
(\underbrace{\log\ldots\log}_{k\;\mbox{\tiny{times}}} t)^{\varepsilon}
\quad\mbox{whenever}\;\;t\gg1.
\end{equation*}
In this case, the estimate \eqref{mean-degree-est} is of a logarithmic kind, and the space $H^{\alpha}(\Gamma)$ is broader than the above union of  Sobolev spaces.

\begin{remark}
Theorems \ref{th-mean} and \ref{th-mean-degree} remain valid for every fractional $\ell>0$, which follows from their proofs.
\end{remark}

\begin{remark}
Let $0\leq\ell\in\mathbb{R}$, $1\leq q<2<p$, and $s=\ell+n/q-n/p$. Assertions (ii) and (iii) of Theorem~\ref{th-mean} hold true if we replace the normed space $W^{\ell}_{p}(\Gamma)$ with the Triebel--Lizorkin space $F^{\ell}_{p,\theta}(\Gamma)$, where $1\leq\theta\leq\infty$, or with the Besov space $B^{\ell}_{p,\theta}(\Gamma)$, where $2\leq\theta\leq\infty$, and replace the convergence class $W^{s}_{q}(\Gamma)$
with $F^{s}_{q,\eta}(\Gamma)$, where $1\leq\eta\leq\infty$, or with
$B^{s}_{q,\eta}(\Gamma)$, where $1\leq\eta\leq2$. This follows from the continuous embeddings
\begin{equation*}
F^{s}_{q,\eta}(\Gamma)\hookrightarrow
W^{\ell+n/2-n/p}_{2}(\Gamma)\hookrightarrow
F^{\ell}_{p,\theta}(\Gamma)
\end{equation*}
and
\begin{equation*}
B^{s}_{q,\eta}(\Gamma)\hookrightarrow
W^{\ell+n/2-n/p}_{2}(\Gamma)\hookrightarrow
B^{\ell}_{p,\theta}(\Gamma)
\end{equation*}
\cite[p.~60, properties (ii) and (iii)]{Triebel06}, as we can conclude analyzing the proof of this theorem. Of course, it is natural to restrict ourselves to the marginal cases where $\eta=\infty$ and $\theta=1$ for $F$-spaces and where $\eta=\theta=2$ for $B$-spaces. In these cases we obtain narrower normed spaces than $W^{\ell}_{p}(\Gamma)$ and get broader convergence classes than $W^{s}_{q}(\Gamma)$. Of course, Theorem~\ref{th-mean-degree} is also valid under this replacement of $W^{\ell}_{p}(\Gamma)$.
\end{remark}

\section{Uniform convergence}\label{sec5}

We use this term to refer to the convergence in the normed space $C^{\ell}(\Gamma)$ of $\ell$ times continuously differentiable functions on the manifold $\Gamma$, with $0\leq\ell\in\mathbb{Z}$. Specifically, if $\ell=0$, we get the convergence in the space $C(\Gamma)$ of continuous functions on $\Gamma$, i.e. the uniform convergence on $\Gamma$. Let $\|\cdot\|_{C,\ell}$ denote the norm in $C^{\ell}(\Gamma)$. If $\ell\geq1$, we use the norm
\begin{equation*}
\|f\|_{C,\ell}:=\sum_{j=1}^{\varkappa}\,\max_{|\varrho|\leq\ell}\,
\sup_{y\in\mathbb{R}^{n}}|\partial^{\varrho}((\chi_{j}f)\circ\pi_{j})(y)|,
\end{equation*}
with $\pi_{j}$ and $\chi_{j}$ being taking from the definition of H\"ormander spaces over $\Gamma$. Here, of course, $\varrho=(\varrho_1,\ldots,\varrho_n)$ is a multi-index, $|\varrho|=\varrho_1+\cdots+\varrho_n$, and $\partial^{\varrho}$ is the partial derivative corresponding to $\varrho$.

\begin{theorem}\label{th-uniform}
Suppose that $0<m\in\mathbb{R}$, $0\leq\ell\in\mathbb{Z}$, and $\alpha\in\mathrm{OR}$. Then the following three conditions are equivalent:
\begin{itemize}
  \item[(i)]
\begin{equation*}
\int\limits_{1}^{\infty}\frac{t^{2\ell+n-1}}{\alpha^{2}(t)}\,dt<\infty;
\end{equation*}
  \item[(ii)] there exists a PsDO $A\in\mathrm{NE}\Psi_{\mathrm{ph}}^{m}(\Gamma)$ that the spectral decomposition \eqref{spectral-decomp} induced by $A$ converges in $C^{\ell}(\Gamma)$ on the class $H^{\alpha}(\Gamma)$;
  \item[(iii)] the expansions in eigenfunctions of an arbitrary PsDO from $\mathrm{NE}\Psi_{\mathrm{ph}}^{m}(\Gamma)$ converge unconditionally in $C^{\ell}(\Gamma)$ on the class $H^{\alpha}(\Gamma)$.
\end{itemize}
\end{theorem}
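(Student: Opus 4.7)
The plan is to follow the cyclic strategy used for Theorem~\ref{th-mean}: establish (i)$\Rightarrow$(iii)$\Rightarrow$(ii)$\Rightarrow$(i), with the middle step immediate and the first and third being the substantive ones.

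For (i)$\Rightarrow$(iii), I would fix an arbitrary PsDO $A\in\mathrm{NE}\Psi_{\mathrm{ph}}^{m}(\Gamma)$ and put $L:=I+A^{\ast}A$. Then $L\in\mathrm{NE}\Psi_{\mathrm{ph}}^{2m}(\Gamma)$ is self-adjoint and positive with $\sigma(L)\subseteq[1,\infty)$, and every eigenfunction of $A$ is also an eigenfunction of $L$, so it suffices to obtain unconditional convergence of the expansions in eigenfunctions of $L$. I would then apply Theorem~\ref{th-basic-series} with $H:=L_{2}(\Gamma)$, $N:=C^{\ell}(\Gamma)$, $\omega(t):=1/\alpha(t^{1/(2m)})$, and $\eta(t)\equiv 1$. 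Condition (i) forces $\alpha(t)\to\infty$ as $t\to\infty$, so $1/\alpha$ is bounded on $[1,\infty)$ and both $\omega$ and $\eta$ are Borel bounded on $\sigma(L)$. By Proposition~\ref{prop-equiv-def}, applied to $L$ in place of $A$ and with order $2m$, the operator $\omega(L)=(\alpha(L^{1/(2m)}))^{-1}$ is an isomorphism of $L_{2}(\Gamma)$ onto $H^{\alpha}(\Gamma)$. The additional ingredient is the continuous embedding $H^{\alpha}(\Gamma)\hookrightarrow C^{\ell}(\Gamma)$, which is the H\"ormander-space refinement of the Sobolev embedding theorem and whose sufficient condition is precisely~(i); it is available from \cite[Section~2.4.2]{MikhailetsMurach14}. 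Composing, we obtain $R=\omega(L)$ as a bounded operator from $L_{2}(\Gamma)$ into $C^{\ell}(\Gamma)$ whose range is $H^{\alpha}(\Gamma)$, and since $(\omega\eta)(L)=\omega(L)$ has image $H^{\alpha}(\Gamma)$, Theorem~\ref{th-basic-series} yields the required unconditional convergence on $H^{\alpha}(\Gamma)$.

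The implication (iii)$\Rightarrow$(ii) is trivial. For (ii)$\Rightarrow$(i) I would argue as follows: if the spectral decomposition induced by some $A\in\mathrm{NE}\Psi_{\mathrm{ph}}^{m}(\Gamma)$ converges in $C^{\ell}(\Gamma)$ for every $f\in H^{\alpha}(\Gamma)$, then its $C^{\ell}$-limit agrees with its $L_{2}$-limit $f$ (since $C^{\ell}(\Gamma)$ and $L_{2}(\Gamma)$ embed continuously into $\mathcal{D}'(\Gamma)$), and hence $f\in C^{\ell}(\Gamma)$. This gives a set-theoretic inclusion $H^{\alpha}(\Gamma)\subseteq C^{\ell}(\Gamma)$, which is continuous by the closed graph theorem. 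The conclusion then follows from the converse half of the embedding characterization: $H^{\alpha}(\Gamma)\hookrightarrow C^{\ell}(\Gamma)$ forces the integral in~(i) to converge, again available from \cite{MikhailetsMurach14}.

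The main obstacle is the embedding characterization $H^{\alpha}(\Gamma)\hookrightarrow C^{\ell}(\Gamma)\Longleftrightarrow\mbox{(i)}$, on which both substantive implications rest. Sufficiency can be proved in local charts by writing $\partial^{\varrho}w$ as the inverse Fourier transform of $(i\xi)^{\varrho}\widehat{w}(\xi)$, applying the Cauchy--Schwarz inequality with the weight $\alpha(\langle\xi\rangle)$, and reducing via spherical coordinates to the finiteness of $\int_{1}^{\infty}t^{2\ell+n-1}\alpha^{-2}(t)\,dt$. Necessity is obtained by testing the hypothetical embedding on a sequence of smooth functions whose Fourier transforms are localized in dyadic annuli and translating the resulting estimate back into the integral condition. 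Once this characterization is in hand, the rest of the plan amounts to packaging the H\"ormander-space calibration of Section~\ref{sec3} against the abstract Theorem~\ref{th-basic-series}.
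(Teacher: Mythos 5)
Your proposal is correct and follows essentially the same route as the paper: the cycle (i)$\Rightarrow$(iii)$\Rightarrow$(ii)$\Rightarrow$(i), with (i)$\Rightarrow$(iii) obtained by applying Theorem~\ref{th-basic-series} to $L=I+A^{\ast}A$ with $\omega(t)=1/\alpha(t^{1/(2m)})$, $\eta\equiv1$, Proposition~\ref{prop-equiv-def}, and the embedding characterization $H^{\alpha}(\Gamma)\subset C^{\ell}(\Gamma)\Leftrightarrow\mathrm{(i)}$, and (ii)$\Rightarrow$(i) via the converse of that characterization. The only difference is cosmetic: the paper cites H\"ormander's embedding theorem (and uses Remark~\ref{rem-comp-norms} to pass from the set inclusion to continuity) where you sketch a direct proof of the embedding and invoke the closed graph theorem.
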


\begin{proof}
It follows from  H\"ormander's embedding theorem \cite[Theorem 2.2.7]{Hermander63} that
\begin{equation}\label{Hormander-embedding-th}
\mbox{condition (i)}\;\Longleftrightarrow\;
H^{\alpha}(\Gamma)\subset C^{\ell}(\Gamma),
\end{equation}
as is seen from \cite[Proposition 2.6~(vi)]{MikhailetsMurach14}.

Let now prove that $\mathrm{(i)}\Rightarrow\mathrm{(iii)}$. We assume (i) to hold true and arbitrarily choose a PsDO $A\in\mathrm{NE}\Psi_{\mathrm{ph}}^{m}(\Gamma)$. Owing to \eqref{Hormander-embedding-th}, the space $H^{\alpha}(\Gamma)$ lies in $L_{2}(\Gamma)$. Hence, the function $1/\alpha$ is bounded on $[1,\infty)$ in view of \cite[Theorem 2.2.2]{Hermander63}. We put $\omega(t):=1/\alpha(t^{1/(2m)})$ whenever $t\geq1$ and consider the function $\omega(L)$ of the operator $L=(I+A^{\ast}A)\in\mathrm{NE}\Psi_{\mathrm{ph}}^{2m}(\Gamma)$ acting in $L_{2}(\Gamma)$. According to Proposition~\ref{prop-equiv-def}, the operator $\omega(L)$ sets an isomorphism between $L_{2}(\Gamma)$ and $H^{\alpha}(\Gamma)$. Hence, owing to \eqref{Hormander-embedding-th}, this operator acts continuously from $H=L_{2}(\Gamma)$ to $N:=C^{\ell}(\Gamma)$, as noticed in Remark~\ref{rem-comp-norms}. We therefore conclude by Theorem~\ref{th-basic-series}, where $\eta(t)\equiv1$, that the expansions in eigenfunctions of $L$ converge unconditionally in $C^{\ell}(\Gamma)$ on the class $H^{\alpha}(\Gamma)$.
We have proved that $\mathrm{(i)}\Rightarrow\mathrm{(iii)}$.

The implication $\mathrm{(iii)}\Rightarrow\mathrm{(ii)}$ is obvious.

The implication $\mathrm{(ii)}\Rightarrow\mathrm{(i)}$ is true because assertion (ii) entails the inclusion $H^{\alpha}(\Gamma)\subset C^{\ell}(\Gamma)$ and hence implies (i) by \eqref{Hormander-embedding-th}.    \end{proof}

Let $m>0$ and $A\in\mathrm{NE}\Psi_{\mathrm{ph}}^{m}(\Gamma)$. As in the previous section, $e:=(e_{j})_{j=1}^{\infty}$ is an orthonormal basis in $H=L_{2}(\Gamma)$ formed by eigenfunctions $e_{j}$ of $A$, with $Ae_j=\lambda_{j}e_j$ and $|\lambda_{j}|\leq|\lambda_{j+1}|$ whenever $j\geq1$. We  complement Theorem~\ref{th-uniform} by estimating the degree of the uniform convergence of the series \eqref{f-series-separable} on H\"ormander classes.

\begin{theorem}\label{th-uniform-degree}
Let $0\leq\ell\in\mathbb{Z}$, and suppose that certain functions $h,\beta\in\mathrm{OR}$ satisfy the following conditions:
$h(t)\to0$ as $t\to\infty$, $h$ decreases on $[1,\infty)$, and
\begin{equation*}
\int\limits_{1}^{\infty}\frac{t^{2\ell+n-1}}{\beta^{2}(t)}\,dt<\infty.
\end{equation*}
Put $\alpha:=\beta/h$, and note that the function $\alpha$ belongs to $\mathrm{OR}$ and satisfies hypothesis \rm(i) \it of Theorem~$\ref{th-uniform}$. Then
\begin{equation}\label{uniform-degree-est-lambda}
\biggl\|f-\sum_{j:|\lambda_{j}|\leq\lambda}
(f,e_j)_{H}\,e_j\biggr\|_{C,\ell}
\leq c\cdot\|f\|_{H,\alpha}\cdot h(\lambda^{1/m})
\end{equation}
and
\begin{equation}\label{uniform-degree-est}
\biggl\|f-\sum_{j=1}^{k}(f,e_j)_{H}\,e_j\biggr\|_{C,\ell}
\leq c\cdot\|f\|_{H,\alpha}\cdot h(k^{1/n})
\end{equation}
for all $f\in H^{\alpha}(\Gamma)$, $\lambda\geq1$, and integer-valued $k\geq1$. Here, $c$ is a certain positive number that does not depend on $f$, $\lambda$, and $k$.
\end{theorem}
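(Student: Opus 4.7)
The plan is to mimic the proof of Theorem~\ref{th-mean-degree} almost verbatim, replacing the Sobolev embedding step by the H\"ormander embedding \eqref{Hormander-embedding-th}, and reducing to Theorem~\ref{th-basic-series} applied to $L:=I+A^{\ast}A\in\mathrm{NE}\Psi_{\mathrm{ph}}^{2m}(\Gamma)$, which satisfies $\sigma(L)\subseteq[1,\infty)$.

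First I would check the preliminary claim that $\alpha=\beta/h\in\mathrm{OR}$ and verifies hypothesis~(i) of Theorem~\ref{th-uniform}. The class $\mathrm{OR}$ is closed under quotients because each factor satisfies a two-sided bound on $\alpha(\lambda t)/\alpha(t)$ for $\lambda\in[1,b]$, so multiplying such bounds gives one for the quotient. Since $h\in\mathrm{OR}$ is decreasing with $h(t)\to 0$, the function $h$ is bounded on $[1,\infty)$, hence $1/\alpha^{2}=h^{2}/\beta^{2}\leq(\sup h)^{2}\,\beta^{-2}$, and the integrability assumption on $1/\beta^{2}$ transfers to $1/\alpha^{2}$. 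By \eqref{Hormander-embedding-th}, both $H^{\beta}(\Gamma)$ and $H^{\alpha}(\Gamma)$ continuously embed into $C^{\ell}(\Gamma)$.

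Next, set $\omega(t):=1/\beta(t^{1/(2m)})$ and $\eta(t):=h(t^{1/(2m)})$ for $t\geq 1$, so that $(\omega\eta)(t)=1/\alpha(t^{1/(2m)})$. By Proposition~\ref{prop-equiv-def}, $\omega(L)$ is an isomorphism of $L_{2}(\Gamma)$ onto $H^{\beta}(\Gamma)$, and composing with the embedding $H^{\beta}(\Gamma)\hookrightarrow C^{\ell}(\Gamma)$ shows that $R:=\omega(L)$ is a bounded operator from $H:=L_{2}(\Gamma)$ into $N:=C^{\ell}(\Gamma)$ (compatibility of norms together with Remark~\ref{rem-comp-norms} takes care of any measurability/closability issue). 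Likewise, $(\omega\eta)(L)$ is an isomorphism of $L_{2}(\Gamma)$ onto $H^{\alpha}(\Gamma)$, so for every $f\in H^{\alpha}(\Gamma)$ there is a unique $g\in L_{2}(\Gamma)$ with $(\omega\eta)(L)g=f$, and $\|g\|_{H}\asymp\|f\|_{H,\alpha}$.

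Then I would invoke Theorem~\ref{th-basic-series}. Writing $\lambda_{j}':=1+|\lambda_{j}|^{2}$ for the eigenvalues of $L$ and choosing $\Upsilon:=\{j:|\lambda_{j}|\leq\lambda\}$ (resp. $\Upsilon:=\{1,\ldots,k\}$), estimate \eqref{th-estimate-series} combined with $r_{g,e}(\Upsilon)\leq 1$ gives
\begin{equation*}
\bigl\|f-P_{\Upsilon}f\bigr\|_{C,\ell}
\leq\|R\|_{H\to N}\cdot\|g\|_{H}\cdot
\sup_{j\notin\Upsilon}h\bigl((\lambda_{j}')^{1/(2m)}\bigr).
\end{equation*}
Since $h$ is decreasing, the supremum in the first case is bounded by $h((1+\lambda^{2})^{1/(2m)})\asymp h(\lambda^{1/m})$ (using $h\in\mathrm{OR}$), and in the second case by $h((1+|\lambda_{k}|^{2})^{1/(2m)})\asymp h(k^{1/n})$, where in the last step I use the asymptotic \eqref{eigenvalue-asymptotic} together with the O-regular variation of $h$. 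Combining these with $\|g\|_{H}\asymp\|f\|_{H,\alpha}$ yields \eqref{uniform-degree-est-lambda} and \eqref{uniform-degree-est}.

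I do not expect any genuine obstacle: every ingredient is either a direct consequence of Theorem~\ref{th-basic-series}, Proposition~\ref{prop-equiv-def}, the H\"ormander embedding \eqref{Hormander-embedding-th}, or the eigenvalue asymptotics \eqref{eigenvalue-asymptotic}. The one step requiring a small amount of care is the two-sided equivalence $h((1+|\lambda_{k}|^{2})^{1/(2m)})\asymp h(k^{1/n})$, which needs the O-regular variation of $h$ to absorb the $(1+|\lambda_{k}|^{2})^{1/(2m)}/|\lambda_{k}|^{1/m}\asymp 1$ discrepancy and the multiplicative constant $\widetilde{c}$ from \eqref{eigenvalue-asymptotic}; this is exactly the same argument as in the proof of Theorem~\ref{th-mean-degree}.
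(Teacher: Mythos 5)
Your proposal is correct and follows essentially the same route as the paper's proof: the same operator $L=I+A^{\ast}A$, the same choices $\omega(t)=1/\beta(t^{1/(2m)})$ and $\eta(t)=h(t^{1/(2m)})$ with $(\omega\eta)(t)=1/\alpha(t^{1/(2m)})$, reduction to Theorem~\ref{th-basic-series} via Proposition~\ref{prop-equiv-def} and the embedding $H^{\beta}(\Gamma)\subset C^{\ell}(\Gamma)$, and the same final asymptotic step (eigenvalue asymptotics plus O-regular variation of $h$) as in the proof of Theorem~\ref{th-mean-degree}. The only detail you gloss over is that $\omega$ must be a bounded function (equivalently, $1/\beta$ bounded, which is also the hypothesis under which Proposition~\ref{prop-equiv-def} yields the isomorphism onto $H^{\beta}(\Gamma)$); the paper settles this by remarking that every function of class $\mathrm{OR}\cap L_{1}[1,\infty)$ is bounded, applied to $1/\beta^{2}$, which is integrable by the assumption on $\beta$ since $t^{2\ell+n-1}\geq1$.
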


\begin{proof}
Let, as above, $L:=(I+A^{\ast}A)$, and consider the bounded functions  $\omega(t):=(\beta(t^{1/(2m)}))^{-1}$ and $\eta(t):=h(t^{1/(2m)})$ of $t\geq1$. (As to $\omega$ note that every function of class $\mathrm{OR}\cap L_{1}[1,\infty)$ is bounded on $[1,\infty)$). Since $(\omega\eta)(t)\equiv(\alpha(t^{1/(2m)}))^{-1}$, the operator $(\omega\eta)(L)$ sets an isomorphism between $H=L_{2}(\Gamma)$ and $H^{\alpha}(\Gamma)$ due to \cite[Theorem~2.23]{MikhailetsMurach14}. Hence, according to Theorem~\ref{th-basic-series} and since the function $h$ decreases, we obtain the estimates of the form \eqref{mean-degree-est-lambda-proof} and \eqref{mean-degree-est-proof}, with the norm $\|\cdot\|_{C,\ell}$ being instead of $\|\cdot\|_{W,\ell,p}$. These estimates imply \eqref{uniform-degree-est-lambda} and \eqref{uniform-degree-est} with the help of the same reasoning as that given in the proof of Theorem~\ref{th-mean-degree}.
\end{proof}

Let us indicate important examples of the function $\alpha$ satisfying hypotheses of Theorem~\ref{th-uniform-degree}. We let $0\leq\ell\in\mathbb{Z}$ and arbitrarily choose numbers $\varepsilon$ and $\delta$ such that $0<\delta<\varepsilon$.

Putting $\alpha(t):=t^{l+n/2+\varepsilon+\delta}$ whenever $t\geq1$, we obtain the power estimates \eqref{uniform-degree-est-lambda} and  \eqref{uniform-degree-est} with $h(t)\equiv t^{-\delta}$. Such power estimates hold true for functions
\begin{equation*}
f\in\bigcup_{s>l+n/2}W^{s}_{2}(\Gamma)\subsetneqq W^{l+n/2}_{2}(\Gamma).
\end{equation*}

To achieve the limiting value $s=l+n/2$, we put $\alpha(t):=t^{l+n/2}\log^{1/2+\varepsilon+\delta}(t+1)$ whenever $t\geq1$ and receive the estimates of a logarithmic kind with $h(t)\equiv\log^{-\delta}(t+1)$.

\section{Applications to ordinary differential operators}\label{sec6}

Let us discuss results of Sections \ref{sec4} and \ref{sec5} in the case where $\Gamma$ is a circle $\mathbb{T}$ of the unit radius and when $A$ is an (ordinary) differential operator on $\mathbb{T}$ of order $m$ with infinitely smooth complex-valued coefficients. Let $\tau$, with $0\leq\tau\leq 2\pi$, set a natural parametrization of $\mathbb{T}$. We suppose that the leading coefficient of $A$ does not equal zero for any $\tau$, which is equivalent to the ellipticity of $A$ on $\mathbb{T}$. We also assume that $A$ is a normal operator in the Hilbert space $H:=L_{2}(\mathbb{T})$. If all coefficients of $A$ are constant, this assumption holds true. In the case of variables coefficients, we recall the following necessary and sufficient condition for $A$ to be self-adjoint in $L_{2}(\mathbb{T})$ \cite[Chapter~I, Section~1, Subsection~5]{Naimark67}: $A$ is a sum of differential operators of the form
\begin{equation*}
A_{2k}u:=(\mu_{k}u^{(k)})^{(k)}\quad\mbox{and}\quad
A_{2k-1}u:=\frac{i}{2}\bigl((\nu_{k}u^{(k)})^{(k-1)}+
(\nu_{k}u^{(k-1)})^{(k)}\bigr)
\end{equation*}
provided that $\mu_{k}$ and $\nu_{k}$ are real-valued functions of class $C^{\infty}(\mathbb{T})$. Specifically, if all coefficients of $A$ are real-valued, the above condition means that $m$ is even and that $A$ is the sum of $A_{2k}$ with $k=0,1,\ldots,m/2$.

Let $e:=(e_{j})_{j=1}^{\infty}$ be an orthonormal basis in $H=L_{2}(\mathbb{T})$ formed by eigenfunctions $e_{j}$ of $A$, with $Ae_j=\lambda_{j}e_j$ and $|\lambda_{j}|\leq|\lambda_{j+1}|$ whenever $j\geq1$. Given a function $f\in L_{2}(\mathbb{T})$, we consider its expansion
\begin{equation}\label{expansion-circle}
f(\tau)=\sum_{j=1}^{\infty}(f,e_j)_{H}\,e_j(\tau),
\quad\mbox{with}\quad 0\leq\tau\leq2\pi,
\end{equation}
in this eigenfunctions, the expansion converging in $L_{2}(\mathbb{T})$. Let $0\leq\ell\in\mathbb{Z}$ and $2<p\in\mathbb{R}$. If $1<q\leq2$ and $s\geq\ell+1/q-1/p$, then Theorem~\ref{th-mean} implies that, for every $f\in W^{s}_{q}(\mathbb{T})$, this expansion converges in the $p$-th mean  unconditionally and preserves this convergence after the termwise differentiation up to $\ell$ times. If a function parameter $\alpha\in\mathrm{OR}$ satisfies
\begin{equation*}
\int\limits_{1}^{\infty}\frac{t^{2\ell}}{\alpha^{2}(t)}\,dt<\infty,
\end{equation*}
then Theorem~\ref{th-uniform} implies that, for every $f\in H^{\alpha}(\mathbb{T})$, the expansion converges uniformly and unconditionally and preserve this convergence after the same differentiation. Note that the derivatives of $e_j(\tau)$ may not form an orthogonal basis in $L_{2}(\mathbb{T})$. Assuming $\alpha$ to satisfy hypotheses of Theorem~\ref{th-mean-degree} or Theorem~\ref{th-uniform-degree}, we see that the estimates \eqref{mean-degree-est-lambda} and \eqref{uniform-degree-est-lambda} become
\begin{equation*}
\sum_{k=0}^{\ell}\biggl(\,\int\limits_{0}^{2\pi}\,
\biggl|f^{(k)}(\tau)-\sum_{j:|\lambda_{j}|\leq\lambda}
(f,e_j)_{H}\,e_j^{(k)}(\tau)\biggr|^{p}d\tau\biggr)^{1/p}
\leq c\cdot\|f\|_{H,\alpha}\cdot h(\lambda^{1/m})
\end{equation*}
and
\begin{equation*}
\sum_{k=0}^{\ell}\,\sup_{0\leq\tau\leq2\pi}\,
\biggl|f^{(k)}(\tau)-\sum_{j:|\lambda_{j}|\leq\lambda}
(f,e_j)_{H}\,e_j^{(k)}(\tau)\biggr|
\leq c\cdot\|f\|_{H,\alpha}\cdot h(\lambda^{1/m})
\end{equation*}
whenever $f\in H^{\alpha}(\mathbb{T})$ and $\lambda\geq1$; here, $h(\lambda^{1/m})\to0$ as $\lambda\to\infty$. Formulas \eqref{mean-degree-est} and \eqref{uniform-degree-est} are rewritten quite similarly.

Considering Remark~\ref{norm-H-alpha} in the $A:=1-d^{2}/d^{2}\tau$ case, we conclude that the norm $\|f\|_{H,\alpha}$ in the space $H^{\alpha}(\mathbb{T})$ is equivalent to the norm
\begin{equation*}
\biggl(|c_{0}(f)|^{2}+\sum_{j=1}^{\infty}
\alpha^{2}(|j|)\,\bigl(|c_{j}(f)|^{2}+|c_{-j}(f)|^{2}\bigr)\biggr)^{1/2},
\end{equation*}
with $c_{j}(f)$ being the Fourier coefficient of $f$ with respect to the eigenfunction $e_{j}(\tau):=(2\pi)^{-1}e^{ij\tau}$ of the differential operator $\nobreak{1-d^{2}/d^{2}\tau}$.

In this case or if $A=d/d\tau$, the expansion \eqref{expansion-circle} becomes the Fourier series with respect to the complex trigonometric system on $[0,2\pi]$. The above results on the mean convergence are new even for this series.

Ending this section, we remark that Theorems \ref{th-mean}, \ref{th-mean-degree}, \ref{th-uniform}, and \ref{th-uniform-degree} remain valid in the case where the coefficients of the ordinary differential operator $A$ have finite smoothness depending, of course, on the parameters involved in these theorems. We will not specify the smoothness.

\section{Applications to multiple Fourier series}\label{sec7}

Let $2\leq n\in\mathbb{Z}$, and consider the $n$-dimensional torus $\mathbb{T}^{n}$; as above, $\mathbb{T}$ is a circle of the unit radius. Let $A$ denote the Laplace--Beltrami operator on $\mathbb{T}^{n}$. It belongs to $\mathrm{NE}\Psi_{\mathrm{ph}}^{2}(\Gamma)$, is a self-adjoint operator in the Hilbert space $H:=L_{2}(\mathbb{T}^{n})$, and takes the form $A=\partial^{2}/\partial^{2}\tau_{1}+\cdots+
\partial^{2}/\partial^{2}\tau_{n}$, where $\tau_{k}$, with $0\leq\tau_{k}\leq 2\pi$, sets a natural parametrization of the $k$-th specimen of $\mathbb{T}$. The eigenfunctions  $e_{j}(\tau):=(2\pi)^{-n}e^{i(j_1\tau_1+\cdots+j_n\tau_n)}$ of $A$, where $\tau=(\tau_1,\ldots,\tau_n)\in[0,2\pi]^{n}$ and $j=(j_1,\ldots,j_n)\in\mathbb{Z}^{n}$, form an orthonormal basis in the Hilbert space $L_{2}(\mathbb{T}^{n})$. Given a function $f\in L_{2}(\mathbb{T}^{n})$, we consider its expansion in the multiple Fourier series
\begin{equation}\label{expansion-torus}
f(\tau)=\sum_{j\in\mathbb{Z}^{n}}c_j(f)\,e_j(\tau),
\quad\mbox{with}\quad\tau\in[0,2\pi]^{n},
\end{equation}
the expansion converging in $L_{2}(\mathbb{T}^{n})$. Here, $c_j(f)$ denotes the Fourier coefficient of $f$ with respect to $e_{j}$,

Since the termwise differentiation of the series \eqref{expansion-torus} gives the multiple Fourier series again, we restrict ourselves to the $\ell=0$ case applying results of Sections \ref{sec4} and \ref{sec5} to the expansion \eqref{expansion-torus}. Let $2<p<\infty$. If $1<q\leq2$ and $s\geq n/q-n/p$, it follows from Theorem~\ref{th-mean} that, whatever $f\in W^{s}_{q}(\mathbb{T}^{n})$, the expansion \eqref{expansion-torus} is unconditionally convergent in the $p$-th mean. If a function parameter $\alpha\in\mathrm{OR}$ satisfies
\begin{equation}\label{cond-alpha-unif-conv-torus}
\int\limits_{1}^{\infty}\frac{t^{n-1}}{\alpha^{2}(t)}\,dt<\infty,
\end{equation}
it follows from Theorem~\ref{th-uniform} that, for every $f\in H^{\alpha}(\mathbb{T}^{n})$, this expansion converges unconditionally uniformly (on $\mathbb{T}^{n}$). Considering the last result in the case of power functions $\alpha(t)\equiv t^{s}$, we conclude that the expansion \eqref{expansion-torus} converges unconditionally uniformly on each class $W^{s}_{2}(\mathbb{T}^{n})$ where $s>n/2$. If $n$ is odd, such a conclusion is substantiated in \cite[Theorem~A.1]{Ilin95} concerning the uniform convergence over balls.

Assuming $\alpha$ to satisfy hypotheses of Theorem~\ref{th-mean-degree} or Theorem~\ref{th-uniform-degree}, we rewrite the estimates \eqref{mean-degree-est-lambda} and \eqref{uniform-degree-est-lambda} as follows:
\begin{equation*}
\biggl(\,\,\int\limits_{[0,2\pi]^{n}}\,
\biggl|f(\tau)-\sum_{j\in\mathbb{Z}^{n}:\|j\|\leq\lambda}
c_j(f)\,e_{j}(\tau)\biggr|^{p}d\tau\biggr)^{1/p}
\leq c\cdot\|f\|_{H,\alpha}\cdot h(\lambda)
\end{equation*}
and
\begin{equation*}
\sup_{\tau\in[0,2\pi]^{n}}\,
\biggl|f(\tau)-\sum_{j\in\mathbb{Z}^{n}:\|j\|\leq\lambda}
c_j(f)\,e_{j}(\tau)\biggr|
\leq c\cdot\|f\|_{H,\alpha}\cdot h(\lambda),
\end{equation*}
whenever $f\in H^{\alpha}(\mathbb{T})$ and $\lambda\geq1$. Here, $h(\lambda)\to0$ as $\lambda\to\infty$, and we take into account that $Ae_{j}=-\|j\|^{2}e_{j}$, with $\|j\|:=(j_{1}^{2}+\cdots+j_{n}^{2})^{1/2}$. According to Remark~\ref{norm-H-alpha}, the norm $\|f\|_{H,\alpha}$ in $H^{\alpha}(\mathbb{T}^{n})$ is equivalent to the norm
\begin{equation*}
\biggl(|c_{0}(f)|^{2}+\sum_{j\in\mathbb{Z}^{n},j\neq0}
\alpha^{2}(\|j\|)\,|c_{j}(f)|^{2}\biggr)^{1/2}.
\end{equation*}

If $\alpha$ belongs to $\mathrm{OR}$ and satisfies \eqref{cond-alpha-unif-conv-torus}, then
\begin{equation}\label{absolute-convegence-multiple-Fourier-series}
\sum_{j\in\mathbb{Z}^{n}}|c_{j}(f)|<\infty
\end{equation}
for every $f\in H^{\alpha}(\mathbb{T}^{n})$, which means the absolute convergence of the Fourier series \eqref{expansion-torus} of $f$ and follows from the unconditional uniform convergence of this series. As is known \cite[Section~6, Subsection~3]{AlimovIlinNikishin76}, property \eqref{absolute-convegence-multiple-Fourier-series} is fulfilled provided that $f$ belongs to the H\"older space $\mathcal{C}^{\ell}(\mathbb{T}^{n})$ for some $\ell>n/2$ and is not fulfilled for certain functions $f\in\mathcal{C}^{n/2}(\mathbb{T}^{n})$. Our sufficient condition for \eqref{absolute-convegence-multiple-Fourier-series} to hold true is weaker than that indicated in terms of H\"older spaces. This is demonstrated by the example when $\alpha(t)\equiv t^{n/2}\log^{1/2+\varepsilon}(t+1)$ for some $\varepsilon>0$, which gives the broader space $H^{\alpha}(\mathbb{T}^{n})$ (satisfying \eqref{cond-alpha-unif-conv-torus}) than the union of all $\mathcal{C}^{\ell}(\mathbb{T}^{n})$ with $\ell>n/2$.

\section{Concluding Remarks}\label{sec8}

Our results are also applied to the case where the operator $A\in\mathrm{NE}\Psi_{\mathrm{ph}}^{m}(\Gamma)$ is of negative order $m$ because the resolvent of $A$ belongs to $\mathrm{NE}\Psi_{\mathrm{ph}}^{-m}(\Gamma)$, is of positive order, and has the same eigenfunctions as $A$ does.

If the closed manifold $\Gamma$ is a sphere (of any dimension), the results of Sections \ref{sec4} and \ref{sec5} admit further detailing and are also new.

Our approach allows other applications, specifically, to elliptic matrix pseudodifferential operators and elliptic differential operators in a bounded Euclidean domain.

\subsection*{Acknowledgments.} This work was funded by the Czech Academy of Sciences within grant RVO:67985840 and by the National Academy of Sciences of Ukraine. The authors was supported by the European Union's Horizon 2020 research and innovation programme under the Marie Sk{\l}odowska-Curie grant agreement No~873071 (SOMPATY: Spectral Optimization: From Mathematics to Physics and Advanced Technology). The second named author was also supported by a grant from the Simons Foundation (1030291, A.A.M.).

\end{document}